\newcommand\deq{\mathrel{\stackrel{\makebox[0pt]{\mbox{\normalfont\tiny def}}}{=}}}
\newcommand{\veps}{\varepsilon}
  \DeclareMathOperator{\Ker}{Ker}
\def\F{\mathbf F}
\def\cA{\mathcal A}
\def\cB{\mathcal B}
\def\cL{\mathcal L}
\def\cU{\mathcal U}
\def\Ker{\mbox{\rm Ker}}
\def\Sym{\mbox{\rm Sym}}
\def\dim{\mbox{\rm dim}}
\def\FF{{\mathbb F}}
\def\f2{{\mathbb F}_{2}}
\newcommand{\AGL}{\mbox{\rm AGL}}
\newcommand{\GL}{\mbox{\rm GL}}
\newcommand{\g}{\gamma}
\newcommand{\gd}{\delta}
\newcommand{\gl}{\lambda}
\newcommand{\gr}{\rho}
\newcommand{\gs}{\sigma}
\renewcommand{\phi}{\varphi}
\DeclareMathOperator{\sym}{Sym}
\def\F2{\mathbb{F}_{\hspace{-0.7mm}2}}
\newtheorem{theorem}{Theorem}[section]
\newtheorem{lemma}[theorem]{Lemma}
\newtheorem{proposition}{Proposition}
\theoremstyle{definition}
\newtheorem{definition}[theorem]{Definition}
\newtheorem{remark}{Remark}
\title[] 
      {Some group-theoretical results on Feistel Networks in a long-key scenario}
\author[R. Aragona, M. Calderini, and R. Civino]{}
\subjclass[2010]{Primary: 94A60, 20B05; Secondary: 20B35.}
 \keywords{Symmetric Cryptography, Block Cipher, Trapdoor, Group Generated by Round Functions, Partitions.}
 \email{riccardo.aragona@univaq.it}
 \email{marco.calderini@uib.no }
 \email{roberto.civino@univaq.it}
\begin{document}
\maketitle

\centerline{\scshape Riccardo Aragona}
{\footnotesize
 \centerline{DISIM, University of L'Aquila }
   \centerline{ Via Vetoio, 67100 Coppito (AQ), Italy}
}
\medskip

   \centerline{\scshape Marco Calderini}
 {\footnotesize   \centerline{Department of Informatics, University of Bergen}
   \centerline{ Postboks 7803, N-5020 Bergen, Norway}
   }
   \medskip
   
\centerline{\scshape Roberto Civino}
{\footnotesize
 \centerline{DISIM, University of L'Aquila }
   \centerline{ Via Vetoio, 67100 Coppito (AQ), Italy}
} 

%

\bigskip


\begin{abstract}
The study of the trapdoors that can be hidden in a block cipher is and has always been a high-interest topic in symmetric cryptography. In this paper we focus on Feistel-network-like ciphers in a classical long-key scenario and we investigate some conditions which make such a construction immune to the partition-based attack introduced recently by Bannier et al.  
\end{abstract}

\section{Introduction}\label{intro}
Most modern block ciphers belong to two families of symmetric cryptosystems, i.e. Substitution-Permutation Networks (SPN) and Feistel Networks. Typically, in both cases,  each encryption function is a composition of key-dependent permutations of the plaintext space, called \emph{round functions}, designed in a such way to provide both \emph{confusion} and \emph{diffusion} (see \cite{sha49}). Confusion is provided applying  public non-linear vectorial Boolean functions, called S-boxes, whereas diffusion is obtained by means of public linear maps, called diffusion layers. The private component of the cipher, i.e. the \emph{key}, is derived from the user-provided information by means of a public procedure known as \emph{key-schedule}. When the round functions are made in such a way the confusion and diffusion layers are followed by the XOR-addition with the so-called \emph{round-key}, where the round-key is every possible vector in the message space, the cipher is a \emph{long-key cipher}.\\

Since the seventies,  many researchers have studied the relationship between some algebraic properties of the confusion / diffusion layers and some algebraic weaknesses of the corresponding ciphers, using a permutation-group-theoretical approach.
In 1975, Coppersmith and Grossman \cite{coppersmith1975generators} considered a set of permutations which can be used to define a block cipher and, by studying the permutation group that they generate, they linked some properties of this group and the security of the corresponding cipher. From this work 
 a new branch of research was born, which focuses on group-theoretical properties that can be exploited to attack encryption methods.
In \cite{kalinski}, the authors proved that if the permutation group generated by the encryption functions of a cipher is too small, then the cipher is vulnerable
to birthday-paradox attacks. In \cite{Ca15} the authors proved that if such group is  isomorphic to a subgroup of the affine group of the plaintext space, induced by a  sum different to the classical bitwise XOR, then it is possible to embed a dangerous trapdoor on it.  More relevant in \cite{CGC-cry-art-paterson1}, Paterson built a DES-like~\cite{DES} cipher whose encryption functions generate an imprimitive group 
and showed how the knowledge of this trapdoor can be turned into an efficient attack to the cipher. 
For this reason, showing that the group generated by the encryption functions of a given cipher is primitive and not of affine type became a relevant branch of research (see \cite{PriPre, GOST_ric, ACDVS,  CGC-cry-art-carantisalaImp, ONAN, Wernsdorf2, We1,We2,We3}).
Recently, in~\cite{bannier2016partition,bannier2017partition} the imprimitive attack shown by Paterson was generalized  by means of a trapdoor which consists in mapping a partition of the plaintext space into a (different) partition of the ciphertext space.
The authors also proved that only \emph{linear} partitions can propagate round-by-round in a long-key SPN. Later Calderini~\cite{calderini} has shown  which conditions ensure that linear partitions cannot propagate in a long-key SPN. \\

In this work we study some properties of the linear-partition propagation under the action of a long-key Feistel network. In particular, our aim is to prove that also in a Feistel-network-like long-key framework, if the cipher allows partition propagation, then the partitions are linear one. Moreover, we provide a partial generalisation of Calderini's result in the Feistel network case. 

\section{Preliminaries and notation}\label{sect2}
The notation and parameters which are used throughout this paper are presented in the following section.\\

Let $n\in \mathbb N$ and let us denote $V=(\mathbb F_2)^n$ the $n$-dimensional vector space over $\mathbb F_2$ equipped with the bit-wise XOR. Let us suppose $\dim(V)=n=bs$ and let us write $V= V_1\oplus V_2\oplus \ldots\oplus V_b$ where  for $1\leq j\leq b$, $\dim(V_j) = s$ and $\oplus$ represents the direct sum of vector subspaces. The subspaces $V_j$ are called \emph{bricks}. For any $I\subset \{1,...,b\}$, with $I\ne \emptyset$ and $I\ne \{1,...,b\}$, the direct sum $\bigoplus_{i\in I} V_i$ is called a {\em wall}. 
We denote by $\sym(V)$ the symmetric group acting on $V$, i.e. the group of all the permutations on $V$. Let us also denote by $\AGL(V)$ the group of all affine permutations of $V$, which is a primitive maximal subgroup of $\Sym(V)$. The translation group on $V$ is denoted by $T(V)$, i.e. 
$T(V) \deq \left\{\sigma_v \mid v \in V, \,  x \mapsto x+v \right\} < \Sym(V)$.\\

Let us now introduce block ciphers, the subject of this work.

\subsection{Block ciphers}
Let  $\mathbb M$ and  $\mathbb K$ be non-empty sets, where $| \mathbb K| \geq | \mathbb M|$.
A \emph{block cipher} $\Phi$ is a family of key-dependent permutations 
\[\{E_K \mid E_K: \mathbb M \rightarrow \mathbb M, \, K \in \mathbb K \},\]
 where $\mathbb M$ is called the message space and $\mathbb K$ the key space. The permutation $E_K$ is called the \emph{encryption function induced by the master key} $K$. The block cipher $\Phi$ is called an \emph{iterated block cipher} if there exists $r \in \mathbb N$ such that for each $K \in \mathbb K$ the encryption function $E_K$ is the composition of $r$ key-dependent \emph{round functions}, i.e. $E_K = \veps_{1,K}\,\veps_{2,K}\ldots\veps_{r,K}$. To provide efficiency, each round function is the composition of a public component provided by the designers, and a private component derived from the user-provided key by means of a public procedure known as \emph{key-schedule}. \\

\noindent In the theory of modern iterated block cipher, two frameworks are mainly considered: Substitution-Permutation Networks, typically abbreviated as SPN  (see e.g. AES \cite{daemen2002design}) and Feistel networks (see e.g. \cite{DES}). Figure \ref{fig.rounds} depicts the more general framework of SPNs, Feistel networks and their round functions; notice that  inside the round function of  a Feistel network, a function called F-function is applied to a half of the state. In both cases, the principles of confusion and diffusion suggested by Shannon \cite{sha49} are implemented by considering each round function (or respectively {F-function}) as the composition of key-induced permutation as well as non-linear confusion layers and linear diffusion layers, which are invertible in the case of SPNs and preferably (but not necessarily) invertible in the case of Feistel networks.  The following definition has been given in \cite{wave} and introduces a class of round functions for iterated block ciphers which is large enough to include the round functions of well-established SPNs and some F-functions of Feistel networks. \\

\begin{figure}
\centering
\includegraphics[scale = 0.105]{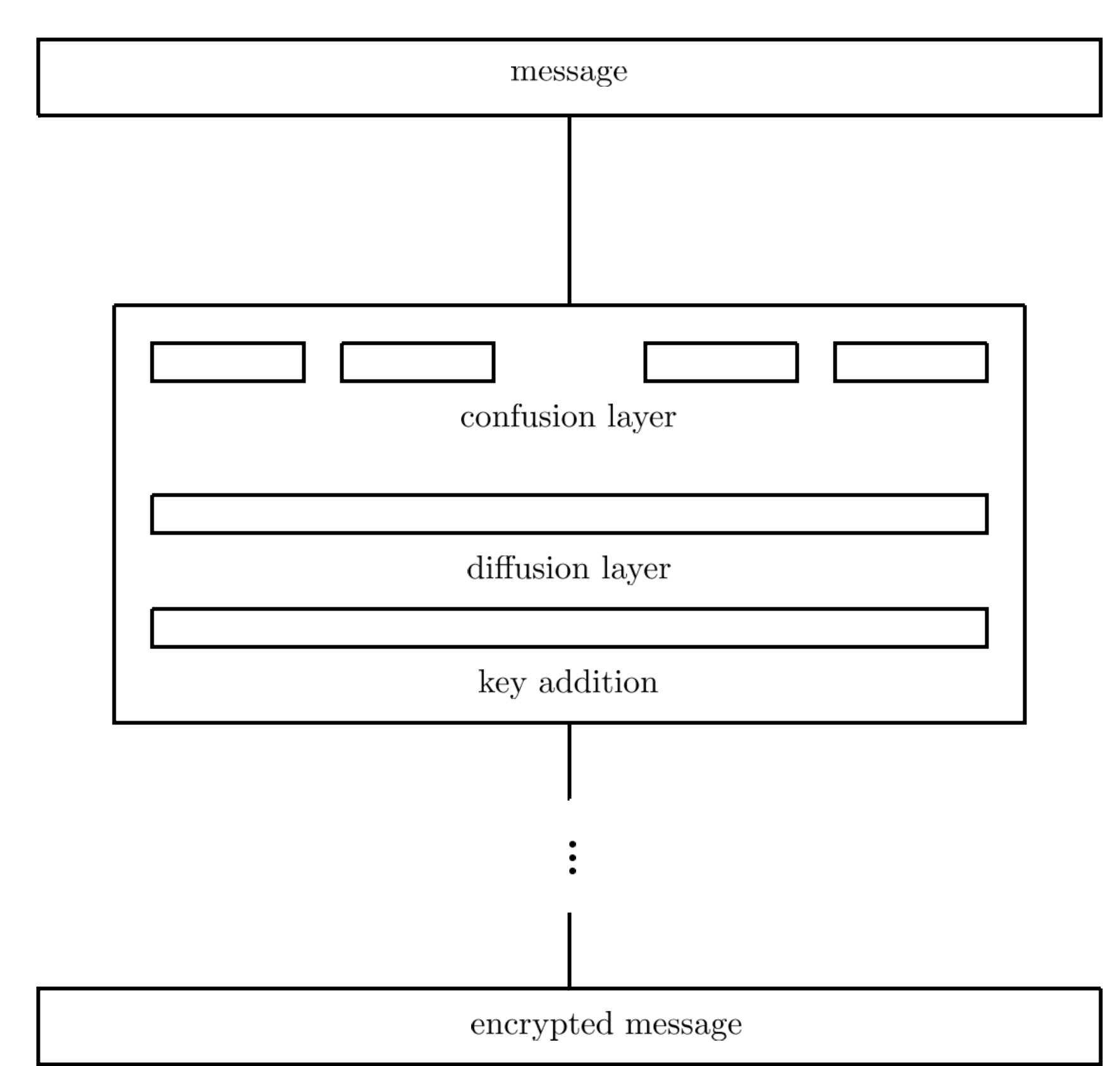}\hfil
\includegraphics[scale= 0.12]{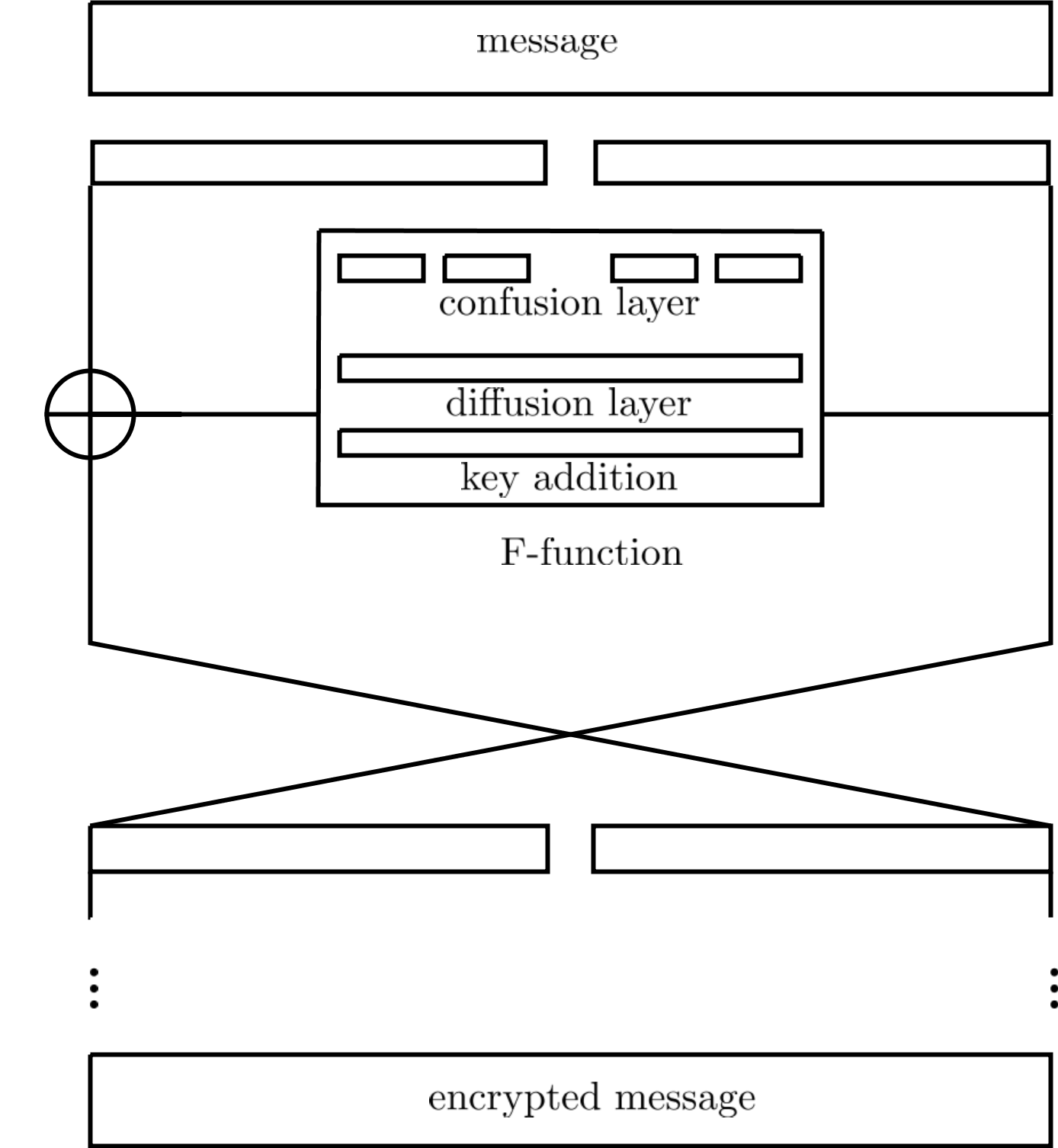}
\caption{Round function of an SPN and of a Feistel network}\label{fig.rounds}
\end{figure}

\begin{definition}
A \emph{classical round function} is a map of the type  $\veps_{k} = \gamma \lambda \sigma_{k} \in \Sym(V)$, where $k \in V$ and 
\begin{itemize}
\item $\gamma : V \rightarrow V$ is a non-linear permutation (parallel S-box) which acts in parallel way on each $V_{j}$, i.e. \[(x_1,x_2,\ldots,x_n)\gamma = \left((x_1,\ldots,x_{s})\gamma^{(1)},\ldots,(x_{s(b-1)+1},\ldots,x_{n})\gamma^{(b)}\right).\] The maps $\gamma^{(j)}  :  V_j \rightarrow V_j $ are traditionally called S-boxes;
 \item $\lambda \in \sym(V)$ is a linear map, called {\em diffusion layer};
 \item  $\sigma_{k}: V \rightarrow V, x \mapsto x+k$, called \emph{key-addition} layer,  represents the addition with the round key $k$, where $+$ is the usual bitwise \emph{XOR}.
 \end{itemize}
\end{definition}

In modern literature, terms \emph{SPN} (or the similar notion of \emph{translation-based cipher}~\cite{CGC-cry-art-carantisalaImp}) and \emph{Feistel network} may refer to a very diverse variety of ciphers. For the purposes of this paper we choose to focus only on ciphers with an XOR-based key addition. For this reason, saying SPN we refer to any cipher $\{E_K \mid K \in \mathbb K \} \subseteq \sym(\mathbb M)$ having an SPN-like structure with $\mathbb M = V$ and having classical round functions on $V$ as round functions, and saying Feistel network to any cipher $\{E_K \mid K \in \mathbb K \} \subseteq \sym(\mathbb M)$ having a Feistel-network-like structure with  $\mathbb M = V\times V$ and having classical round functions on $V$ as F-functions. In both cases, the composition $\rho_i \deq \gamma_i\lambda_i$ is called the \emph{generating function} of the $i$-th round of the cipher. Notice that usually in real-life ciphers it holds $\rho_1 = id_V$, which means that in the first round only a key addition is applied to the plaintext (\emph{whitening}).  In this setting, an $r$-round cipher is defined once the list of its generating functions $\rho_1,\ldots,\rho_r$ and its key-schedule are given.\\

Once the key $K \in \mathbb K$ to be used for the encryption has been chosen, the encryption function is obtained by composing the $r$ classical round functions induced by the corresponding round keys, which are, as previously mentioned, derived by the key-schedule. Hence, in the quite popular setting in which the round key is XORed to the state, the key-schedule is a  function 
\[
\mathcal S: \mathbb K  \rightarrow V^r
\]
such that $\mathcal S(K) \deq (k_1,\ldots,k_r)$ for any $K\in\mathbb K$, where  $\mathcal S(K)_i\deq k_i$ is the $i$-th round key derived from the user-provided key $K$ and $\veps_{i,K} = \veps_{\mathcal S(K)_i }$. \\

In the following section we recall some basic security notion for Boolean function that we will use later. 
\subsection{Security notions for Boolean functions}
The following property is the standard request for the linear component of a block cipher to spread the input bits as much as possible within the ciphertext.

\begin{definition}
A linear map $\gl\in \GL(V)$ is called a {\em proper diffusion layer} if no wall is invariant under $\gl$ and it is called  a {\em strongly
proper diffusion layer} if there are no walls $W$ and $W'$ such that $W\gl = W'$.
\end{definition}
In the remainder of this section we recall notions of non-linearity which will be useful in this work. 
Let us recall that the non-linear layer of the ciphers which will be considered throughout this work act applying  vectorial Boolean functions $\gamma^{(i)}$ to each brick of the block. Notice that we can always assume $0\gamma^{(i)} = 0$ without loss of generality, since otherwise $0\gamma^{(i)}$ can be included as part of key-addition layer of the previous round, for each round index $1 \leq i \leq b$ (see \cite[Remark 3.3]{CGC-cry-art-carantisalaImp}).

\begin{definition}
Let $f \in \Sym\left((\FF_2)^s\right)$.
Let us define
\[
\gd_f(a,b)=|\{x\in{(\FF_2)^s} \mid\, xf+(x+a)f=b\}|.
\]
The map $f$ is said  $\gd$-differentially uniform if 
\[
{\gd}=\max\limits_{\substack{a,b \\ a \ne 0}}
\gd_f(a,b).
\]
\end{definition}
\noindent It is known that  $\delta$-differentially uniform functions with small $\delta$ are ``farther'' from being linear compared to functions to with a larger differential uniformity. Notice indeed that when $f$ is linear, then $\delta=2^s$. Let us recall that $2$-differentially uniform S-boxes, which reach the lower bound of the previous definition, are called  \emph{Almost Perfect Non-linear (APN)}.
Vectorial Boolean functions used as S-boxes in block ciphers must have low uniformity to prevent differential cryptanalysis (see \cite{CGC2-cry-art-biham1991differential}) and so APN S-boxes usually represent an optimal choice in terms of resistance to differential attacks.
\medskip

We conclude this section giving another notion of non-linearity that we will use in some results of this work. 

\begin{definition}[\cite{CGC-cry-art-carantisalaImp}]
Let $1\leq \delta <s$ and $f \in \Sym((\FF_2)^s)$ such that $f(0)=0$. The function $f$ is {\it strongly $\delta$-anti-invariant} if for each  $U$ and $W$ proper and non-trivial subspaces of $(\mathbb F_2)^s$, then
\[
Uf=W \implies \dim(U)=\dim(W)<s-\delta.
\]
\end{definition}
\noindent Notice that if $1 \leq \delta< \delta' < s$ and $f$ is strongly $\delta'$-anti-invariant, then it is also strongly $\delta$-anti-invariant.

\subsection{A long-key scenario}
As mentioned in the introduction, the focus of this work is on a specific type of key-schedule, i.e. the one defined as follows:
\begin{definition}\label{def:ks}
Let $\Phi$ be an $r$-round cipher on $\mathbb M$ and let $\mathcal S: \mathbb K \rightarrow V^r$ its key-schedule. Then $\Phi$ is called a \emph{long-key cipher} if $\mathcal S (\mathbb K) = V^r$.
\end{definition}

The group generated by the encryption functions of a long-key cipher and its properties will be investigated throughout this work.
In the next section we will, in particular, study its behavior in relation to the attacks described in the following section.
\section{Group-theoretical trapdoors}
The study of groups related to block ciphers may reveal weaknesses which can be  exploited to perform algebraic attacks. In this paper, we focus on some particular group-theoretical attacks (see e.g. \cite{CGC-cry-art-paterson1,bannier2016partition}), based on undesirable properties of such permutation groups.
Notice that the study of the group generated by the encryption functions is a hard task in general, since the dependence on the key-schedule is not easily turned into algebraic conditions. The aim of this work is to study the group generated by the encryption functions of $\Phi$, denoted by $\Gamma(\Phi)$,  in an easier setting, i.e. the one of a long-key cipher. In particular we will focus on Feistel networks, providing a first generalisation of the results obtained in \cite{calderini} regarding translation-based ciphers.   For this purpose we also make use of the following group
\[
	\Gamma_h(\Phi)\deq \langle \veps_{h,K} \mid K \in \mathbb K \rangle ,
	\]
where all the possible round keys for round $h$ are considered.
 From this, the group
\[
	\Gamma_{\infty}(\Phi)\deq\langle \Gamma_h(\Phi)\mid 1 \leq h \leq r\rangle.
\]
can be obtained. As mentioned in Section~\ref{intro}, the group $\Gamma_{\infty}$ has been extensively studied in recent years, 
being  the closest to the one generated by the encryption function that can be successfully investigated. However it is worth stressing that $\Gamma_{\infty}(\Phi)$ may be \emph{a-priori} way larger than the actual group of the encryption functions $\Gamma(\Phi)$. \\

The \emph{imprimitivity} of such a group is one of the properties which may easily lead an attacker to a successful break of the cipher. The imprimitivity attack and its generalisation are described in the following section.

\subsection{Imprimitive action and partition-based trapdoor}\label{sect2_tb}
We recall that a permutation group $G$ acting on $V$ is called called \emph{primitive} if no non-trivial partition of $V$ is invariant under the action of $G$, 
i.e. there is no partition $\cA$ of $V$, different from the trivial partitions $\{\{v\}\mid v \in V\}$ and $\{V\}$, such that  $Ag\in \cA$ for all $A\in\cA$ and $g\in G$.
 On the other hand, if a non-trivial $G$-invariant partition $\cA$ exists, the group is called \emph{imprimitive}. Each $A \in \cA$ is called an \emph{imprimitivity block}. \\

The imprimitivity is a very undesirable property for group generated by the encryption functions of a block cipher. As Paterson \cite{CGC-cry-art-paterson1} showed, indeed, if this group is imprimitive, then it is possible to embed a trapdoor in the cipher which may allow attackers to recover crucial key-information with way less effort than a bruce force attack. 
Moreover, in~\cite{CGC-cry-art-carantisalaImp} the authors characterised the cryptographic conditions of the boolean components of a cipher which guarantee that the corresponding group $\Gamma_\infty$ is primitive. These results apply to the family of translation-based ciphers (see~\cite{CGC-cry-art-carantisalaImp}), which is large enough to contain some of the most popular encryption methods (see ~\cite{present,daemen2002design}). The conditions on the layers of the cipher which will be considered in this work are the same used in~\cite{CGC-cry-art-carantisalaImp}, or generalisation of those. 
The idea of attacking a cipher by exploiting the imprimitive action of its group has been generalized in a recent work \cite{bannier2016partition}, where the partition-based attack is introduced. The basic idea behind the attack is that, even if the group is primitive, it may exists a sequence of partitions $\mathcal A_1, \ldots, \mathcal A_r$ such that the $i$-th round function of each encryption function maps $\mathcal A_i$ into $\mathcal A_{i+1}$. It is not hard to notice that, provided that this condition is true, the cipher can be attacked using an argument similar to the one exploiting the imprimitivity. In  \cite{bannier2016partition},  the authors show an example of such attack on an SPN.\\ 

We report here some of the definitions and results presented in \cite{bannier2016partition}.

\begin{definition}
Let $\gr \in \Sym(V)$ and $\cA, \cB$ be two partitions of $V$. Let $\cA \gr$ denote the set $\{A\gr \mid A \in \cA\}$. We say that $\gr$ maps $\cA$ into $\cB$ if $\cA\gr = \cB$. Moreover, if $G$ is a permutation group we say that $G$ maps $\cA$ into $\cB$ if for all $\gr\in G$, $\gr$ maps $\cA$ into $\cB$.
\end{definition}

\begin{definition}
 A partition $\cA$ of $V$ is called \emph{linear} if there exists $U<V$ such that $$\cA=\{ U+v\mid v\in V\}.$$ We denote $\cA$ by $\cL(U)$.
\end{definition}

The following result, introduced by Harpes and Massey in \cite{harpes1997partitioning}, characterizes the possible partitions $\cA$ and $\cB$ such that the translation group $T(V)$ maps $\cA$ into $\cB$.

\begin{proposition}\label{prop:partT}
Let $\cA$ and $\cB$ be two partitions of $V$. Then $T(V)$ maps $\cA$ into $\cB$ if and only if $\cA=\cB$ and $\cA$ is a linear partition.
\end{proposition}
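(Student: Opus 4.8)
The plan is to prove the two implications separately, disposing of the easy converse first. For the implication $(\Leftarrow)$, assume $\cA=\cB=\cL(U)$ for some subspace $U$. I would simply verify that every translation permutes the cosets of $U$: for a generic block $U+w$ one computes $(U+w)\sigma_v = U+(w+v)$, which is again a coset of $U$, and as $w$ ranges over $V$ so does $w+v$; hence $\cL(U)\sigma_v = \cL(U)$ for every $v\in V$. This gives $\cA\sigma_v=\cB$ for all $v$, i.e. $T(V)$ maps $\cA$ into $\cB$.

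For the forward implication $(\Rightarrow)$, I would first instantiate the hypothesis at $v=0$. Since $\sigma_0=\mathrm{id}_V$, we obtain $\cA=\cA\sigma_0=\cB$, so the two partitions coincide and it only remains to establish linearity. Let $U$ be the block of $\cA$ containing $0$. The guiding observation is that, because $\cA$ is invariant under every $\sigma_v$, the block through an arbitrary point $v$ is exactly the image under $\sigma_v$ of the block through $0$; that is, the block containing $v$ equals $U+v$. This already shows that every block of $\cA$ is a translate of $U$, so once $U$ is known to be a subspace the conclusion $\cA=\cL(U)$ follows immediately.

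The crux is therefore to show that $U$ is an $\mathbb F_2$-subspace. I would take any $u\in U$ and consider $\sigma_u$, which sends $U$ to $U+u$; by the remark above, $U+u$ is the block through $u$, but $u\in U$, so that block is $U$ itself, forcing $U+u=U$. As this holds for every $u\in U$, closure under addition follows (together with $0\in U$, and with additive inverses being free in characteristic $2$), so $U<V$ is a subspace. Combining this with the description of a generic block yields $\cA=\cL(U)$. I expect this subspace-identification step to be the only subtle point of the argument; the remaining verifications are routine coset computations that use nothing beyond the definition of $\sigma_v$ and the $T(V)$-invariance of $\cA$.
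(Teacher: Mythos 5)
Your proof is correct. Note that the paper itself states Proposition~\ref{prop:partT} without proof, citing Harpes and Massey~\cite{harpes1997partitioning}, so there is no internal argument to compare against; your route --- specializing to $\sigma_0$ to obtain $\cA=\cB$, identifying the block of an arbitrary $v$ as $U\sigma_v=U+v$ where $U$ is the block through $0$, and then deducing $U+u=U$ for every $u\in U$ (which, with $0\in U$ and characteristic $2$, makes $U$ a subspace) --- is exactly the standard argument for this classical fact, and every step is justified.
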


We report now the main result of \cite{bannier2016partition}. 

\begin{theorem}\label{th:francesi}
Let $\Phi$ be an $r$-round long-key SPN on $\mathbb M =V$. Suppose that there exist non-trivial partitions $\cA$ and $\cB$ such that for each key $K$ the encryption function $E_K$ maps $\cA$ to $\cB$. Define $\cA_1=\cA$ and $\cA_{i+1}=\cA_{i}\rho_{i}$ for $1\le i\le r$, where $\rho_i$ is the classical round function for the $i$-th round. Assume also that $\rho_1$ is the identity map. Then
\begin{itemize}
\item $\cA_{r+1}=\cB$
\item $\cA_{i}$ is a linear partition for any $1\le i\le r+1$.
\end{itemize}
\end{theorem}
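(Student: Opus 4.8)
The plan is to prove both assertions simultaneously by induction on the round index, exploiting the long-key hypothesis $\mathcal S(\mathbb K)=V^r$ together with Proposition~\ref{prop:partT}. First I would write the encryption function explicitly as
\[
E_K = \rho_1\sigma_{k_1}\rho_2\sigma_{k_2}\cdots\rho_r\sigma_{k_r},
\]
where $(k_1,\dots,k_r)$ ranges over all of $V^r$, and introduce the partial ``tails'' $\Psi_i\deq\rho_i\sigma_{k_i}\rho_{i+1}\sigma_{k_{i+1}}\cdots\rho_r\sigma_{k_r}$, so that $E_K=\Psi_1$ and $\Psi_i=\rho_i\sigma_{k_i}\Psi_{i+1}$. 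Since $\cA_{i+1}=\cA_i\rho_i$ by definition, the hypothesis $\cA E_K=\cB$ reads $\cA_1\Psi_1=\cB$; the goal is to show, for each $i$, that $\cA_i$ is linear and that $\cA_i\Psi_i=\cB$ holds for \emph{every} admissible choice of the remaining round keys.

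The engine of the argument is a ``difference of two keys'' comparison. Assuming inductively that $\cA_i\Psi_i=\cB$ for all admissible keys, rewriting $\Psi_i=\rho_i\sigma_{k_i}\Psi_{i+1}$ and using $\cA_{i+1}=\cA_i\rho_i$ gives $\cA_{i+1}\sigma_{k_i}\Psi_{i+1}=\cB$. I would then freeze the tail keys $k_{i+1},\dots,k_r$ and let $k_i$ vary: for two values $k_i,k_i'$ both sides equal $\cB$, so applying $\Psi_{i+1}^{-1}$ to both partitions yields $\cA_{i+1}\sigma_{k_i}=\cA_{i+1}\sigma_{k_i'}$, that is $\cA_{i+1}\sigma_{k_i+k_i'}=\cA_{i+1}$. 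The long-key hypothesis lets $k_i+k_i'$ range over all of $V$, so $\cA_{i+1}$ is invariant under $T(V)$ and Proposition~\ref{prop:partT} forces $\cA_{i+1}$ to be linear. Once linearity is known, a linear partition is invariant under every translation, hence $\cA_{i+1}\sigma_{k_i}=\cA_{i+1}$; substituting back gives $\cA_{i+1}\Psi_{i+1}=\cB$ for all remaining keys, closing the induction.

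For the base case I would use the whitening assumption $\rho_1=\mathrm{id}_V$: then $\cA_1=\cA_2=\cA$ and $\Psi_1=\sigma_{k_1}\Psi_2$, so varying $k_1$ in $\cA_1\sigma_{k_1}\Psi_2=\cB$ shows $\cA_1$ is linear exactly as above. Running the induction up to $i=r$ yields $\cA_r$ linear with $\cA_r\rho_r\sigma_{k_r}=\cA_{r+1}\sigma_{k_r}=\cB$ for every $k_r$; one more single-key variation makes $\cA_{r+1}$ linear, whence $\cA_{r+1}\sigma_{k_r}=\cA_{r+1}$ and therefore $\cA_{r+1}=\cB$. I expect the main difficulty to be bookkeeping rather than conceptual depth: one must ensure that the quantifier ``for all remaining keys'' is genuinely preserved at each stage, so that every round key still ranges freely over $V$ when it is needed---this is exactly where the long-key hypothesis is indispensable, and where a restrictive key-schedule would break the argument. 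The one genuinely substantive input is Proposition~\ref{prop:partT}, which converts $T(V)$-invariance into linearity; everything else is the algebra of composing permutations with partitions.
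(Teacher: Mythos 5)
Your proof is correct and follows essentially the same route as the argument this paper relies on: Theorem~\ref{th:francesi} is quoted from \cite{bannier2016partition} rather than reproved here, and both that proof and the paper's own invocation of it (in the proof of Theorem~\ref{th:partizioni}, where ``each possible map $\sigma_{(h_i,k_i)}$ appears between $\bar\rho_i$ and $\bar\rho_{i+1}$'') hinge on exactly your ``difference of two keys'' step---the long-key hypothesis places every translation between consecutive generating functions, so each intermediate partition is $T(V)$-invariant and hence linear by Proposition~\ref{prop:partT}. Your remaining bookkeeping (preserving the ``for all remaining keys'' quantifier through the induction, using $\rho_1=\mathrm{id}_V$ for the linearity of $\cA_1$, and deducing $\cA_{r+1}=\cB$ from translation-invariance of linear partitions) is sound and matches the cited argument.
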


In the previously shown result, Bannier et al. proved that the only partitions which propagate round-by-round are the linear ones. The next results, proved in~\cite{calderini}, shows which conditions are sufficient to avoid the linear-partition propagation  in the SPN case. The aim of this work is to provide a partial generalisation of these results in the  Feistel network case. 

\begin{proposition}\label{prop:blocchi}
Let $\gamma \in \Sym(V)$ be a parallel S-box, i.e. $\gamma=(\gamma^{(1)},...,\gamma^{(b)})$ with $\gamma^{(i)}\in\Sym(V_i)$ for all $1 \leq i \leq b$. Suppose that for all $1 \leq i \leq b$ the function $\gamma^{(i)}$ is
\begin{itemize}
\item  $2^\delta$-differentially uniform, with $\delta<m$,
\item strongly $(\delta-1)$-anti-invariant. 
\end{itemize}
Let $\cL(U)$ and $\cL(W)$ be non-trivial linear partitions of $V$. Then $\gamma$ maps $\cL(U)$ into $\cL(W)$ if and only if $U$ and $W$ are wall. Moreover $U=W$.
\end{proposition}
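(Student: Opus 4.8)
The plan is to translate the partition hypothesis into a statement about derivatives and then to peel off the brick structure of $U$ and $W$ one quotient at a time. First I would record the equivalence that $\gamma$ maps $\cL(U)$ into $\cL(W)$ if and only if $x\gamma+(x+u)\gamma\in W$ for every $x\in V$ and every $u\in U$; this is immediate from the fact that two vectors lie in a common coset of $U$ exactly when their difference is in $U$, together with a cardinality count showing that the injective image of a coset of $U$ is a full coset of $W$. Since $0\gamma=0$ and $0\in U$, the coset $U$ itself maps to the coset of $W$ through the origin, so $U\gamma=W$ and in particular $\dim U=\dim W$. I would also observe that $\gamma^{-1}=((\gamma^{(1)})^{-1},\dots,(\gamma^{(b)})^{-1})$ is again a parallel S-box satisfying the same hypotheses (differential uniformity and strong anti-invariance are preserved under inversion) and maps $\cL(W)$ into $\cL(U)$, so the roles of $U$ and $W$ are symmetric and can be swapped freely. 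The easy implication is then disposed of: if $U=W$ is a wall $\bigoplus_{i\in I}V_i$, applying $\gamma$ brick-wise keeps the free coordinates free (each $\gamma^{(i)}$ permutes $V_i$) and merely relabels the frozen ones, so every coset of $U$ is sent to a coset of $U$, giving $\cL(U)\gamma=\cL(U)$.

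For the hard implication I would first show that each brick projection of $U$ and $W$ is trivial or full. Writing $\pi_i$ for the projection onto $V_i$, the identity $U\gamma=W$ yields the genuine subspace relation $\pi_i(U)\gamma^{(i)}=\pi_i(W)$. Fix $i$ with $\pi_i(U)\ne 0$ and choose $u\in U$ with $u_i\ne 0$: the derivative image $\{x_i\gamma^{(i)}+(x_i+u_i)\gamma^{(i)}:x_i\in V_i\}$ lies in $\pi_i(W)$, has at least $2^{s-\delta}$ elements by $2^{\delta}$-differential uniformity, and omits $0$ because $u_i\ne 0$. Were $\pi_i(W)$ a proper subspace, strong $(\delta-1)$-anti-invariance would give $\dim\pi_i(W)\le s-\delta$, so $|\pi_i(W)|\le 2^{s-\delta}$; then the derivative image, contained in $\pi_i(W)$ and of size at least $2^{s-\delta}$, would have to equal $\pi_i(W)$ and hence contain $0$, a contradiction. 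Thus $\pi_i(W)=V_i$ and, $\gamma^{(i)}$ being bijective, $\pi_i(U)=V_i$. Setting $I=\{i:\pi_i(U)\ne 0\}$, this places $U,W$ inside $V_I:=\bigoplus_{i\in I}V_i$ with full projection onto every $V_i$, $i\in I$.

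The crux, and the step I expect to be the main obstacle, is upgrading ``full projections'' to the equality $U=W=V_I$, i.e.\ ruling out diagonal-type subspaces having full projections but missing interior. Fix $i\in I$ and set $R_i=W\cap V_i$. The plan is to exploit that for $u\in U$ the derivative values decouple across bricks, so that freezing every coordinate except the $i$-th in $x\gamma+(x+u)\gamma\in W$ confines the derivative image of $\gamma^{(i)}$ in direction $u_i$ to a single coset of $R_i$; as $u_i$ runs over all of $\pi_i(U)=V_i$, this says precisely that $q_i\gamma^{(i)}$ is linear, where $q_i:V_i\to V_i/R_i$ is the quotient. A difference-set computation gives $\dim R_i\ge s-\delta$, while the subspace relation $(U\cap V_i)\gamma^{(i)}=R_i$ together with strong $(\delta-1)$-anti-invariance gives $\dim R_i\le s-\delta$ unless $R_i=V_i$; hence either $R_i=V_i$ or $\dim R_i=s-\delta$. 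In the latter case $K_i=\ker(q_i\gamma^{(i)})$ is nonzero, and for any $a\in K_i\setminus\{0\}$ the derivative image of $\gamma^{(i)}$ in direction $a$ lands inside $R_i$, still has at least $2^{s-\delta}=|R_i|$ elements, and omits $0$, the same contradiction as before. Therefore $R_i=V_i$ for every $i\in I$, so $V_i\subseteq W$ and, symmetrically, $V_i\subseteq U$, forcing $U=W=V_I$; this is a nontrivial wall because $\cL(U)$ is a non-trivial partition. With the converse already checked, this finishes the proof, and the delicate point throughout is that the decoupling of derivatives across bricks is exactly what converts the global partition hypothesis into the per-brick linearity that the anti-invariance assumption can then contradict at the boundary dimension $s-\delta$.
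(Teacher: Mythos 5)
Your proof is correct. One caveat about the comparison itself: the paper does not prove Proposition~\ref{prop:blocchi} at all --- it is recalled there as a result of \cite{calderini} --- so the only meaningful benchmark is the proof in that reference, which adapts the imprimitivity-block argument of \cite{CGC-cry-art-carantisalaImp}. Your argument runs on the same mechanism as that proof: freezing every brick except the $i$-th confines derivative images of $\gamma^{(i)}$ to cosets of $W\cap V_i$; $2^\delta$-differential uniformity forces $\dim(W\cap V_i)\geq s-\delta$; strong $(\delta-1)$-anti-invariance applied to the subspace relation $(U\cap V_i)\gamma^{(i)}=W\cap V_i$ forces $\dim(W\cap V_i)\leq s-\delta$ unless $W\cap V_i=V_i$; and the boundary case is killed by exhibiting a nonzero direction whose derivative image must fill a subspace and hence contain $0$. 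Where you differ is in how that direction is produced: the direct argument (essentially what \cite{calderini} does) takes any nonzero $v\in U\cap V_i$, since $v\gamma^{(i)}\in W\cap V_i$ already places the derivative image in direction $v$ inside $W\cap V_i$ itself; you instead first prove that the projections $\pi_i(U)=\pi_i(W)=V_i$ are full, deduce that $\gamma^{(i)}$ followed by the quotient map $V_i\to V_i/(W\cap V_i)$ is additive, and take a nonzero element of its kernel $K_i$. Since $U\cap V_i\leq K_i$, your route subsumes the direct one; it is longer but sound, and the additivity of the quotient composite is a nice structural byproduct. One blemish worth fixing: your opening ``if and only if'' is false in the reverse direction as stated --- if $W$ is a wall and $U$ is any proper nontrivial subspace of $W$, then $x\gamma+(x+u)\gamma\in W$ holds for all $x\in V$ and $u\in U$, yet $\cL(U)\gamma$ merely refines $\cL(W)$, the cells having the wrong size; your cardinality count silently assumes $|U|=|W|$. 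This is harmless here, because you only ever use the forward implication and you prove the easy half of the proposition directly, but the equivalence should be weakened to an implication or the cardinality hypothesis made explicit.
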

As a consequence, Calderini derived the following result, which guarantees immunity from the partition-based attack~\cite{calderini}.

\begin{theorem}\label{th:partcal}
Let  $\rho_1,\ldots,\rho_r\in \mathrm{Sym}(V)$ and 
let $\Phi$ be an $r$-round SPN on $\mathbb{M}=V$, where the $i$-th round applies  $\rho_i=\gamma_i\gl_i$ such that $0\rho_i=0$. Let us assume that for some $1\leq i < r$ we have
\begin{itemize}
\item $\g_i$ and $\g_{i+1}$ are parallel maps which apply $2^\delta$-differentially uniform and $(\delta-1)$-strongly anti-invariant S-boxes, for some $\delta < m$,
\item $\gl_i$ a strongly-proper diffusion layer.
\end{itemize}
Then no encryption function $E_K$ maps a non-trivial partition of $V$ into a non-trivial partition of $V$. 
\end{theorem}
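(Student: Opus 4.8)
The plan is to argue by contradiction in the trapdoor scenario, i.e. to show that there is no pair of non-trivial partitions $\cA,\cB$ of $V$ such that every $E_K$ maps $\cA$ into $\cB$. Writing each round function as $\veps_{j,K}=\rho_j\sigma_{k_j}$ with $\rho_j=\g_j\gl_j$, so that $E_K=\rho_1\sigma_{k_1}\rho_2\sigma_{k_2}\cdots\rho_r\sigma_{k_r}$, I set $\cA_1\deq\cA$ and $\cA_{j+1}\deq\cA_j\rho_j$. The whole argument rests on first proving that every intermediate partition $\cA_1,\dots,\cA_{r+1}$ is linear, with $\cA_{r+1}=\cB$, and then extracting a contradiction at rounds $i$ and $i+1$ from Proposition~\ref{prop:blocchi} together with the strong properness of $\gl_i$.

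The linearity of the $\cA_j$ is the content of Theorem~\ref{th:francesi}; since that result is stated under the whitening hypothesis $\rho_1=\mathrm{id}$, which is not assumed here, I would reproduce its short proof so as to cover an arbitrary first round. Exploiting the long-key property, I peel off the round keys starting from the last round. For fixed $k_1,\dots,k_{r-1}$ the partition $\cD\deq\cA\rho_1\sigma_{k_1}\cdots\rho_r$ satisfies $\cD\sigma_{k_r}=\cB$ for every $k_r\in V$; choosing $k_r=0$ gives $\cD=\cB$, whence $T(V)$ maps $\cB$ into itself and Proposition~\ref{prop:partT} shows that $\cB$ is linear and that $\cD=\cB$ does not depend on $k_1,\dots,k_{r-1}$. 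Applying $\rho_r^{-1}$ and iterating this peeling downwards, a backward induction yields that each $\cA_j$ equals the appropriate pullback of $\cB$, is key-independent, and is linear; write $\cA_j=\cL(U_j)$, where each $U_j$ is a proper non-trivial subspace since the $\rho_j$ are bijections and $\cA,\cB$ are non-trivial.

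I would then localise to rounds $i$ and $i+1$. From $\cA_{i+1}=\cL(U_i)\g_i\gl_i=\cL(U_{i+1})$ and the linearity and invertibility of $\gl_i$, applying $\gl_i^{-1}$ gives $\cL(U_i)\g_i=\cL(U_{i+1}\gl_i^{-1})$, so $\g_i$ maps a non-trivial linear partition into a non-trivial linear partition. By the hypotheses on $\g_i$, Proposition~\ref{prop:blocchi} then forces $U_i$ to be a wall and $U_i=U_{i+1}\gl_i^{-1}$, i.e. $U_{i+1}=U_i\gl_i$. Running the same step with $\g_{i+1}$---which is legitimate because $i+1\le r$, so that $\cA_{i+2}=\cL(U_{i+2})$ is among the linear partitions already obtained---shows likewise that $U_{i+1}$ is a wall.

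This produces the contradiction: $U_i$ and $U_{i+1}$ are both walls and $U_{i+1}=U_i\gl_i$, so $\gl_i$ maps the wall $U_i$ to the wall $U_{i+1}$, which is exactly what a strongly-proper diffusion layer forbids. Hence no such pair $\cA,\cB$ exists. I expect the main obstacle to be the first step: re-deriving the linearity of all intermediate partitions without the whitening assumption and verifying that the backward peeling is valid, which is precisely where the long-key hypothesis and Proposition~\ref{prop:partT} are indispensable. The subsequent steps are a clean combination of Proposition~\ref{prop:blocchi} with the definition of strong properness, the only delicate bookkeeping being the correct handling of the linear layer $\gl_i$ when moving between $\cA_i$ and $\cA_{i+1}$.
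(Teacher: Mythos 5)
The paper itself contains no proof of Theorem~\ref{th:partcal}: it is quoted from~\cite{calderini}, and the paper only supplies the ingredients (Theorem~\ref{th:francesi}, Proposition~\ref{prop:partT}, Proposition~\ref{prop:blocchi}). So the only meaningful comparison is with the derivation the paper implicitly intends, which is exactly the route you chose: use the long-key property to peel off round keys and force linearity of the intermediate partitions, then localise at rounds $i$ and $i+1$. Your reading of ``SPN'' as ``long-key SPN'' is the correct one (the peeling is impossible without it), and the second half of your argument is sound: once $\cA_i=\cL(U_i)$, $\cA_{i+1}=\cL(U_{i+1})$ and $\cA_{i+2}=\cL(U_{i+2})$ are known to be linear, Proposition~\ref{prop:blocchi} applied to $\cL(U_i)\g_i=\cL(U_{i+1}\gl_i^{-1})$ and to $\cL(U_{i+1})\g_{i+1}=\cL(U_{i+2}\gl_{i+1}^{-1})$ makes $U_i$ and $U_{i+1}$ walls with $U_{i+1}=U_i\gl_i$, contradicting strong properness of $\gl_i$.

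The genuine gap is your claim that the backward peeling makes \emph{every} $\cA_j$ linear, including $\cA_1$. Each peeling step uses a key addition sitting immediately after the partition being analysed: from $\cD_j\sigma_{k_j}=\cP$ for all $k_j$, with $\cP$ key-independent, you get $\cD_j=\cP$ and $T(V)$-invariance of $\cP$. This applies to $\cA_2=\cA\rho_1,\dots,\cA_{r+1}=\cB$, each of which is followed by a key addition, but not to $\cA_1=\cA$, since no $\sigma_k$ precedes $\rho_1$; this is precisely what the whitening hypothesis $\rho_1=\mathrm{id}$ in Theorem~\ref{th:francesi} is for, and it cannot be dispensed with. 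Hence your localisation step is unjustified when $i=1$, and the failure is not repairable: for $i=1$ the statement as transcribed here (no whitening) is actually false. Take $r=2$, let $\g_1,\g_2$ satisfy the S-box hypotheses, $\gl_1$ be strongly proper and $\gl_2$ be the identity; choose any wall $U$ and set $\cA\deq\cL(U\gl_1^{-1})\g_1^{-1}$ and $\cB\deq\cL(U)$. Then $\cA\rho_1=\cL(U\gl_1^{-1})\gl_1=\cL(U)$, translations fix the linear partition $\cL(U)$, and $\cL(U)\g_2=\cL(U)$ because a parallel S-box permutes the cosets of a wall; so every $E_K$ maps the non-trivial partition $\cA$ onto the non-trivial partition $\cB$ although all hypotheses hold with $i=1$. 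You should therefore either reinstate the whitening assumption (under which $i=1$ is vacuous, since the identity admits no factorisation $\g_1\gl_1$ with S-boxes of the required uniformity) or restrict the claim to $2\le i<r$; with either amendment your proof is complete.
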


\section{Results}
As previously mentioned, the aim of this work is to prove, for long-key Feistel networks, some results which are linked to those recalled in the previous section. We study the linear partition-propagation under the action of a long-key Feistel network. The results obtained may be considered as a starting-point for a complete generalisation of the the results of Sect.~\ref{sect2_tb} to Feistel networks, proved in~\cite{calderini} for translation-based ciphers.
For this purpose, let us consider a typical Feistel structure. Let us introduce a formal $2n \times 2n$ matrix which implements the Feistel structure. Such a formal matrix is defined as 
$$\bar \rho \deq \begin{pmatrix}0_n&1_n\\1_n&\rho\end{pmatrix},$$
where $0_n$  in the $n\times n$ zero matrix, $1_n$ is the $n\times n$ identity matrix and $\bar\rho$ is called  \emph{Feistel operator induced by the generating function $\rho$}, whose right action on $(x_1, x_2)\in V\times V$ is given by
$$
(x_1,x_2)\bar \rho=(x_1,x_2)\begin{pmatrix}0_n&1_n\\1_n&\rho\end{pmatrix}\deq (x_2, x_1 + x_2\rho).
$$
Note that $\bar\rho$ has the inverse matrix

$$\bar \rho^{-1} \deq \begin{pmatrix}\rho&1_n\\1_n&0_n\end{pmatrix}.$$

\noindent Let us define
$$
\begin{array}{rcl}
\sigma_{(h,k)}: V\times V & \rightarrow & V\times V\\
(x_1,x_2) & \mapsto  &(x_1+k,x_2+h),
\end{array}
$$
and
\[
T(V\times V)\deq\{\sigma_{(h,k)} \mid (h,k)\in V\times V \}.
\]
Let $\Phi$ be an $r$-round long-key Feistel network acting on $V\times V$,  having the following $i$-th round function
\[
\varepsilon_{i,K}=\bar \rho_i \sigma_{(0,k_i)},
\]
where $\bar\rho_i$ is the $i$-th Feistel operator induced by $\rho_i$ and $k_i$ is the $i$-th round key.
In this setting
\begin{equation}\label{gamma_LK}
\Gamma(\Phi)
=\langle \bar\rho_1\sigma_{(0,k_1)}\cdots\bar\rho_r\sigma_{(0,k_r)} \mid 
(k_1,\ldots,k_r)\in V^r\rangle.
\end{equation}
\begin{lemma}\label{lm:tv}
If $\Phi$ is a long-key Feistel network as above, then
$$\langle \bar \rho_1 \bar \rho_2\cdots\bar \rho_r, T(V\times V)\rangle<\Gamma(\Phi).$$
In particular $T(V\times V)<\Gamma(\Phi)$.
\end{lemma}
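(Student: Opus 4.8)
The plan is to exhibit explicitly the two generators on the left-hand side as elements of $\Gamma(\Phi)$. The first is immediate: setting all round keys equal to $0$ in \eqref{gamma_LK} and using $\sigma_{(0,0)} = \mathrm{id}$ yields $E_{(0,\ldots,0)} = \bar\rho_1\bar\rho_2\cdots\bar\rho_r \in \Gamma(\Phi)$. It therefore remains to prove $T(V\times V) < \Gamma(\Phi)$. Since every translation factors as $\sigma_{(h,k)} = \sigma_{(0,k)}\,\sigma_{(h,0)}$, it suffices to show that both families $\{\sigma_{(0,k)} \mid k \in V\}$ and $\{\sigma_{(h,0)} \mid h \in V\}$ lie in $\Gamma(\Phi)$, as these two subgroups generate $T(V\times V)$.

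For the first family I would isolate the final round key. Writing $Q = \bar\rho_1\sigma_{(0,k_1)}\cdots\bar\rho_{r-1}\sigma_{(0,k_{r-1})}\bar\rho_r$, one has $E_{(k_1,\ldots,k_{r-1},0)} = Q$ and $E_{(k_1,\ldots,k_{r-1},k_r)} = Q\,\sigma_{(0,k_r)}$, so that
$$E_{(k_1,\ldots,k_{r-1},0)}^{-1}\,E_{(k_1,\ldots,k_{r-1},k_r)} = \sigma_{(0,k_r)} \in \Gamma(\Phi)$$
for every $k_r \in V$. This places the whole family $\{\sigma_{(0,k)} \mid k \in V\}$ inside $\Gamma(\Phi)$.

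For the second family I would instead perturb the $(r-1)$-th key (this is where I assume $r \ge 2$) while keeping all other keys at $0$. With $P = \bar\rho_1\cdots\bar\rho_{r-2}$, the same cancellation of the common prefix $P\bar\rho_{r-1}$ gives
$$E_{(0,\ldots,0)}^{-1}\,E_{(0,\ldots,0,k_{r-1},0)} = \bar\rho_r^{-1}\,\sigma_{(0,k_{r-1})}\,\bar\rho_r \in \Gamma(\Phi).$$
The point is then to identify this conjugate. Applying $\bar\rho_r^{-1}\colon (x_1,x_2)\mapsto (x_1\rho_r + x_2, x_1)$, then the translation, then $\bar\rho_r\colon (y_1,y_2)\mapsto (y_2, y_1 + y_2\rho_r)$, the two nonlinear contributions $x_1\rho_r$ cancel in characteristic $2$ and one is left with $(x_1,x_2)\mapsto (x_1, x_2 + k_{r-1})$, i.e. $\bar\rho_r^{-1}\sigma_{(0,k_{r-1})}\bar\rho_r = \sigma_{(k_{r-1},0)}$. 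Hence $\{\sigma_{(h,0)} \mid h \in V\} \subset \Gamma(\Phi)$ as well.

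Combining the three conclusions gives $\bar\rho_1\cdots\bar\rho_r \in \Gamma(\Phi)$ together with $T(V\times V) = \langle \sigma_{(0,k)},\, \sigma_{(h,0)} \mid k,h\in V\rangle < \Gamma(\Phi)$, which is the claim. The only genuinely delicate step is the conjugation computation in the third paragraph: the mechanism that converts a first-coordinate translation into a second-coordinate one is precisely the swap built into the Feistel operator, and it is essential that the nonlinear terms introduced by $\rho_r$ cancel, which happens only because we conjugate by $\bar\rho_r$ itself (rather than by an arbitrary generator) and because the characteristic is $2$. I would also flag that the argument requires at least two rounds, so that a round key strictly before the last is available to produce the second family.
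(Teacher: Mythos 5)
Your first two steps are correct and are essentially the paper's own: the all-zero key gives $\bar\rho_1\bar\rho_2\cdots\bar\rho_r\in\Gamma(\Phi)$, and isolating the last round key gives $\sigma_{(0,k)}\in\Gamma(\Phi)$ for every $k\in V$ (the paper simply takes $k_1=\cdots=k_{r-1}=0$; your extra generality changes nothing). The proof breaks in the third paragraph, at the conjugation step.

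The identity $\bar\rho_r^{-1}\sigma_{(0,k)}\bar\rho_r=\sigma_{(k,0)}$ is false for the cipher this paper actually defines. In this paper the round key is XORed onto the half that has just passed through the F-function, i.e.\ the round map is $(x_1,x_2)\mapsto(x_2,\,x_1+x_2\rho_i+k_i)$, so $\sigma_{(0,k)}$ translates the \emph{second} coordinate: this is stated in the remark following Theorem~\ref{th:partizioni} (the round key ``acts on the right side of the message after applying the generating function'') and is the convention used in every displayed computation, including the paper's own proof of Lemma~\ref{lm:tv}. (The printed definition of $\sigma_{(h,k)}$ crosses the indices; that display is a typo, and it is the reading your computation silently relies on.) Under the intended convention the middle translation perturbs exactly the coordinate that is then fed through $\rho_r$:
\[
(x_1,x_2)\mapsto(x_1\rho_r+x_2,\,x_1)\mapsto(x_1\rho_r+x_2,\,x_1+k)\mapsto\bigl(x_1+k,\;x_2+x_1\rho_r+(x_1+k)\rho_r\bigr).
\]
Since $\rho_r$ is a permutation, $x_1\rho_r+(x_1+k)\rho_r\neq 0$ for $k\neq 0$, so this map is never $\sigma_{(k,0)}$; the two non-linear contributions are $x_1\rho_r$ and $(x_1+k)\rho_r$, they do not cancel, and for the non-linear generating functions considered here the conjugate is not a translation at all. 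The repair is to transport the key-translation through the Feistel operator in the \emph{opposite} direction, which is precisely the paper's commutation relation $\bar\rho_i\sigma_{(0,k)}=\sigma_{(k,0)}\bar\rho_i$; inside $\Gamma(\Phi)$ it is realized by perturbing the \emph{first} round key and cancelling the zero-key encryption on the \emph{right}:
\[
E_{(k,0,\ldots,0)}\,E_{(0,\ldots,0)}^{-1}
=\bar\rho_1\sigma_{(0,k)}\bar\rho_2\cdots\bar\rho_r\cdot\bar\rho_r^{-1}\cdots\bar\rho_2^{-1}\bar\rho_1^{-1}
=\bar\rho_1\sigma_{(0,k)}\bar\rho_1^{-1}
=\sigma_{(k,0)}.
\]
This yields the missing family $\{\sigma_{(h,0)}\mid h\in V\}$, and it works even when $r=1$, removing your restriction $r\geq 2$.
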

\begin{proof}
In order prove that $\bar \rho_1 \bar \rho_2\cdots\bar \rho_r\in \Gamma(\Phi)$, it is sufficient to consider the key $(k_1,\ldots,k_r)=(0,\ldots,0)$. Moreover, considering the key $(0,\ldots,0,k_r)$,  we obtain
$\bar \rho_1 \bar \rho_2\cdots\bar \rho_r\gs_{(0,k_r)}\in \Gamma(\Phi)$, and so $\gs_{(0,k_r)}\in \Gamma(\Phi)$ for all $k_r\in V$. 
Finally,
\[
(x_1,x_2) \begin{pmatrix}0_n&1_n\\1_n&\rho_i\end{pmatrix}\sigma_{(0,k)}=(x_2, x_1+x_2\rho + k)=(x_1,x_2) \sigma_{(k,0)} \begin{pmatrix}0_n&1_n\\1_n&\rho_i\end{pmatrix},
\]
for any $1 \leq i \leq r$, $k\in V$ and $(x_1,x_2) \in V\times V$, so we have
$\bar \rho_i \gs_{(0,k)}=\gs_{(k,0)}\bar \rho_i$, for any $1 \leq i \leq r$ and $k\in V$. Therefore  
\[
\bar \rho_1\gs_{(0,k)} \bar \rho_2\cdots\bar \rho_r=\gs_{(k,0)}\bar \rho_1 \bar \rho_2\cdots\bar \rho_r,
\]
for any $k\in V$, and so
 $\gs_{(h,k)}\in \Gamma(\Phi)$ for any $(h,k)\in V\times V$. The claim then derives by the fact that $\gs_{(h,0)}\gs_{(0,k)}=\gs_{(h,k)}$.
\end{proof}

In the following theorem we study which partitions can propagate in a long-key Feistel network.

\begin{theorem}\label{th:partizioni}

Let $\Phi$ be an $r$-round long-key Feistel network on $\mathbb M =V\times V$.  Suppose that there exist non-trivial partitions $\cA$ and $\cB$ such that for each key $K$ the encryption function $E_K$ maps $\cA$ to $\cB$. Define $\cA_1=\cA$ and  $\cA_{i+1}=\cA_{i}\bar\rho_{i}$, for $1\le i\le r-1$, where $\bar\rho_i$ is the Feistel operator induced by the generating function $\rho_i$ for the $i$-th round. Then,
\begin{itemize}
\item $\cA_{r+1}=\cB$
\item  $\cA_{i}$ is a linear partition, for any $2\le i\le r$.
\end{itemize}
Moreover, if $\cA=\cB$, i.e. $\Gamma(\Phi)$ acts imprimitively, then $\cA$ is a linear partition.
\end{theorem}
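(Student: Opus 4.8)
The plan is to prove linearity by showing that each intermediate partition $\cA_i$, for $2\le i\le r$, is invariant under the \emph{full} translation group $T(V\times V)$, and then to invoke Proposition~\ref{prop:partT} (applied with $V$ replaced by $V\times V$), which forces such a partition to be linear. The first bullet is immediate from the long-key hypothesis, which permits the all-zero key: for it $E_K=\bar\rho_1\bar\rho_2\cdots\bar\rho_r$, so $\cA_{r+1}=\cA\bar\rho_1\cdots\bar\rho_r=\cA E_K=\cB$. The substance of the argument is therefore the invariance claim.

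First I would extract invariance under the ``right-half'' translation group $T_2\deq\{\sigma_{(0,k)}\mid k\in V\}$. Fix a round index $i$, set $k_1,\dots,k_{i-1}$ to $0$, freeze $k_{i+1},\dots,k_r$ arbitrarily, and write $E_K=P\,\sigma_{(0,k_i)}\,Q$ with $P=\bar\rho_1\cdots\bar\rho_i$ and $Q=\bar\rho_{i+1}\sigma_{(0,k_{i+1})}\cdots\bar\rho_r\sigma_{(0,k_r)}$. Since $\cA E_K=\cB$ for every value of $k_i$ while $Q$ does not depend on $k_i$, applying $Q^{-1}$ gives $\cA P\,\sigma_{(0,k_i)}=\cB Q^{-1}$, a partition independent of $k_i$; comparing with $k_i=0$ yields $\cA P\,\sigma_{(0,k_i)}=\cA P$ for all $k_i$. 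As $\cA P=\cA_{i+1}$, this shows $\cA_{i+1}$ is $T_2$-invariant for every $1\le i\le r$; hence $\cA_2,\dots,\cA_{r+1}$ are all $T_2$-invariant.

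The key step is to upgrade this to invariance under the ``left-half'' group $T_1\deq\{\sigma_{(k,0)}\mid k\in V\}$, and here I would exploit the Feistel commutation relation $\bar\rho_j\sigma_{(0,k)}=\sigma_{(k,0)}\bar\rho_j$ established inside the proof of Lemma~\ref{lm:tv}. Writing $\cA_{j+1}=\cA_j\bar\rho_j$ and using that $\cA_{j+1}$ is $T_2$-invariant, I compute $\cA_j\bar\rho_j=\cA_{j+1}=\cA_{j+1}\sigma_{(0,k)}=\cA_j\bar\rho_j\sigma_{(0,k)}=\cA_j\sigma_{(k,0)}\bar\rho_j$; applying $\bar\rho_j^{-1}$ on the right gives $\cA_j\sigma_{(k,0)}=\cA_j$, i.e.\ $\cA_j$ is $T_1$-invariant, for every $1\le j\le r$. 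Intersecting the two ranges, each $\cA_j$ with $2\le j\le r$ is invariant under both $T_1$ and $T_2$, hence under $T(V\times V)=\langle T_1,T_2\rangle$ (recall $\sigma_{(h,k)}=\sigma_{(h,0)}\sigma_{(0,k)}$). Proposition~\ref{prop:partT} then yields that $\cA_j$ is linear for $2\le j\le r$, which is the second bullet; note that $\cA_1=\cA$ is only shown $T_1$-invariant and $\cA_{r+1}=\cB$ only $T_2$-invariant, which is exactly why the endpoints are excluded.

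Finally, for the ``moreover'' part, if $\cA=\cB$ then every $E_K$ fixes $\cA$, so $\cA$ is invariant under $\Gamma(\Phi)=\langle E_K\mid K\in\mathbb K\rangle$; since $T(V\times V)<\Gamma(\Phi)$ by Lemma~\ref{lm:tv}, $\cA$ is $T(V\times V)$-invariant and Proposition~\ref{prop:partT} makes it linear. (Equivalently, under $\cA=\cB$ the first bullet gives $\cA_1=\cA=\cB=\cA_{r+1}$, which is then both $T_1$- and $T_2$-invariant.) I expect the main obstacle to be precisely the $T_1$-step: the round keys only ever supply right-half translations, and the nonlinearity of $\rho_j$ blocks any naive attempt to commute a left-half translation through $\bar\rho_j$. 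The Feistel relation of Lemma~\ref{lm:tv} is what converts $T_2$-invariance of $\cA_{j+1}$ into $T_1$-invariance of $\cA_j$, and keeping the index bookkeeping straight is the only delicate point.
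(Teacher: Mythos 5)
Your proposal is correct and follows essentially the same route as the paper: both arguments combine the long-key freedom with the Feistel commutation relation $\bar\rho_j\sigma_{(0,k)}=\sigma_{(k,0)}\bar\rho_j$ to show that the intermediate partitions are invariant under the full translation group $T(V\times V)$, and then conclude via Proposition~\ref{prop:partT}, with the ``moreover'' part handled identically through Lemma~\ref{lm:tv}. The only organizational difference is directional: the paper pushes left-half translations \emph{forward} into the next round key, showing that arbitrary $\sigma_{(h_i,k_i)}$ effectively appear between consecutive Feistel operators inside $\Gamma(\Phi)$, and then defers to the argument of Theorem~\ref{th:francesi}, whereas you run that argument explicitly and read the same relation \emph{backward}, converting $T_2$-invariance of $\cA_{j+1}$ into $T_1$-invariance of $\cA_j$ --- the mathematical content is the same.
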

\begin{proof}

For any $1\le i\le r-1$ and any $(x,y)\in V\times V$, we have

\begin{align*}
(x,y)\bar\rho_i\gs_{(h_i,k_i)}\bar\rho_{i+1}\gs_{(0,k_{i+1})} &= (x+y\rho_i+k_i,y+h_i+(x+y\rho_i+k_i)\rho_{i+1}+k_{i+1})\\
&= (x,y)\bar\rho_i\gs_{(0,k_i)}\bar\rho_{i+1}\gs_{(0,h_i+k_{i+1})},
\end{align*}
and so
\[
\bar\rho_i\gs_{(h_i,k_i)}\bar\rho_{i+1}\gs_{(0,k_{i+1})} =\bar\rho_i\gs_{(0,k_i)}\bar\rho_{i+1}\gs_{(0,h_i+k_{i+1})}
\] for any possible choice of $h_i, k_i, k_{i+1}\in V$.
This implies that for any possible choice of $h_{1},k_{1},\ldots,h_{r-1},k_{r-1}, k_{r} \in V$ the map
\[
E=\bar\rho_1 \gs_{(h_{1},k_{1})}\bar\rho_{2}\cdots \bar\rho_{r-1} \gs_{(h_{r-1},k_{r-1})}\bar\rho_{r} \gs_{(0,k_{r})}
\]
is an element of $\Gamma(\Phi)$ as defined in Eq.\eqref{gamma_LK}.
Therefore, for any $1\le i\le r-1$, each possible map $\sigma_{(h_i,k_i)} \in T(V\times V)$ appears between $\bar\rho_i$ and $\bar\rho_{i+1}$, and so we have a similar scenario of Theorem \ref{th:francesi} (\cite[Theorem 3.4]{bannier2016partition}). Hence, proceeding as in Theorem \ref{th:francesi}, the desired claim follows.
Moreover, if $\cA=\cB$  the group $\Gamma(\Phi)$ acts imprimitively on $V\times V$, since all its the generators map the partition $\cA$ into itself. From Lemma \ref{lm:tv},  $\cA$ is a block system also for $T(V\times V)$, and so, by Proposition~\ref{prop:partT}, $\cA$ is linear.
\end{proof}

\begin{remark}
Note that we have defined the action of a round function of $\Phi$ on $V \times V$ in a such way that the corresponding round key acts on the right side of the message after applying the generating function $\rho$ on the right factor of $V\times V$. In some real-case scenarios, however, it may be possible that $\rho$ acts  after the action of the corresponding round key. If this is the case,  the $i$-th round function is defined in the following way:
$$
\varepsilon_{i,K}=\sigma_{(0,k_i)}\bar \rho_i \sigma_{(k_i,0)}.
$$
Indeed 
\begin{align*}
(x_1,x_2)\varepsilon_{i,K}& =(x_1,x_2) \sigma_{(0,k_i)}\bar \rho_i \sigma_{(k_i,0)}\\
& =(x_1,x_2+k_i) \bar \rho_i \sigma_{(k_i,0)}\\
&=(x_2+k_i,x_1+(x_2+k_i)\rho_i)  \sigma_{(k_i,0)}\\
&=(x_2,x_1+(x_2+k_i)\rho_i).
\end{align*}
In this setting we have that the group of the cipher with a long-key key-schedule is
\[
G\deq \langle\sigma_{(0,k_1)} \bar\rho_1\sigma_{(k_1,k_2)}\cdots\sigma_{(k_{r-1},k_{r})}\bar\rho_r \sigma_{(k_r,0)} \mid 
(k_1,\ldots,k_r)\in V^r\rangle,
\]
and so we have $\bar \rho_1 \bar \rho_2\cdots\bar \rho_r\in G$.  We cannot prove that $G$ contains $T(V\times V)$ as well.
Note that, as observed in the proof of Theorem \ref{th:partizioni}, any function of the type 
$$\sigma_{(0,k_1)} \bar\rho_1\sigma_{(k_1,k_2)}\cdots\sigma_{(k_{r-1},k_{r})}\bar\rho_r \sigma_{(k_r,0)}$$ can be represented as a function of type 
$$\sigma_{(0,k_1)} \bar\rho_1\sigma_{(0,k_2)}\bar\rho_2\sigma_{(0,k_1+k_3)}\cdots\sigma_{(0,k_{r-2}+k_{r})}\bar\rho_r \sigma_{(k_r,k_{r-1})},$$ which is an element of $\Gamma(\Phi)$, recalling that $\Phi$ represents the cipher where the key addition is applied after the generating function.
Thus, studying the properties of $\Gamma(\Phi)$ gives also important informations on $G$, e.g. if $\Gamma(\Phi)$ acts imprimitively, then so does $G$. More in general, partitions for $\Gamma(\Phi)$ are also partition for $G$.

\end{remark}
 In what follows, we aim at studying algebraic conditions which need to be satisfied by some partitions to prevent the partition-based attack. In particular,
 we classify a family of block systems which, in the case of Feistel networks, cannot be exploited for partition-based cryptanalysis. It is important to point out that the considered set of block systems contains the most used type of partitions for cryptanalysis. 
In order to do so, we need to study the subgroups of the direct product $(V \times V, +)$. We make use of the following result, due to 
 Goursat~\cite[Sections 11--12]{goursat}, which characterises the
subgroups of the direct product  of two groups in terms of
suitable sections of the direct factors (see
also~\cite{Petrillo}).  

\begin{theorem}[Goursat's Lemma \cite{goursat}]
  Let $G_1$  and $G_2$ be two  groups. There
  exists  a bijection between  
  \begin{enumerate}
  \item 
    the set  of all subgroups  of the 
    direct  product  $G_1\times   G_2$,  and  
  \item 
    the  set   of  all  triples
    $(A/B,C/D,\psi )$, where 
    \begin{itemize}
    \item $A$ is a subgroup of $G_{1}$,
    \item $C$ is a subgroup of $G_{2}$,
    \item $B$ is a normal subgroup of $A$,
    \item $D$ is a normal subgroup of $C$, and
    \item $\psi: A/B\to C/D$ is a group isomorphism.
    \end{itemize}
  \end{enumerate}

\noindent In this bijection, each subgroup of $G_1\times G_2$ can be uniquely
  written as
  \begin{equation*}
    U_{\psi}= \{
      (a,c) \in A \times C 
      :
      (a + B) \psi =c + D
      \}.
  \end{equation*}
\end{theorem}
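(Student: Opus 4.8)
The plan is to construct the claimed bijection explicitly, by producing a map from subgroups of $G_1\times G_2$ to triples and a map in the opposite direction, and then verifying that the two maps are mutually inverse. Throughout I work with general (possibly non-abelian) groups, writing $e_1,e_2$ for the identities; in the abelian situation of interest the normality requirements below are automatic.

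First I would describe the passage from a subgroup $U\le G_1\times G_2$ to a triple. Let $A$ and $C$ be the images of $U$ under the projections onto $G_1$ and $G_2$ respectively; these are subgroups. Set $B\deq\{a\in G_1:(a,e_2)\in U\}$ and $D\deq\{c\in G_2:(e_1,c)\in U\}$. Conjugating an element $(b,e_2)\in U$ by an arbitrary $(a,c)\in U$ shows $B\trianglelefteq A$, and symmetrically $D\trianglelefteq C$. I would then define $\psi\colon A/B\to C/D$ by sending $aB$ to $cD$, where $c$ is any element with $(a,c)\in U$. The crucial verifications here are that $\psi$ is well defined --- independence of the choice of $c$ comes from $(a,c)^{-1}(a,c')=(e_1,c^{-1}c')\in U$, hence $c^{-1}c'\in D$, and independence of the representative $a$ from the definition of $B$ --- and that $\psi$ is an isomorphism, surjectivity being immediate from $C$ being a projection and injectivity from the fact that $\psi(aB)=D$ forces $(a,e_2)\in U$.

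Conversely, given a triple $(A/B,C/D,\psi)$ I would form $U_\psi$ as in the statement and check it is a subgroup of $G_1\times G_2$: closure and stability under inverses follow directly because $\psi$ is a homomorphism. It then remains to show the two constructions invert one another. The inclusion $U\subseteq U_\psi$, where $U_\psi$ is rebuilt from the triple attached to $U$, is immediate from the definition of $\psi$; for the reverse inclusion one notes that if $(a,c)\in U_\psi$ and $(a,c')\in U$ is a witness for $\psi$, then $c$ and $c'$ differ by an element of $D$, whence $(a,c)\in U$. In the other direction, starting from a triple and recovering it from $U_\psi$ reduces to checking that the two projections of $U_\psi$ are exactly $A$ and $C$, that the fibre subgroups $\{a:(a,e_2)\in U_\psi\}$ and $\{c:(e_1,c)\in U_\psi\}$ are exactly $B$ and $D$, and that the reconstructed isomorphism equals $\psi$; each is a direct unwinding of definitions.

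I do not expect a single deep difficulty: the argument is a matter of careful bookkeeping. The main obstacle is organising the verifications so that nothing is circular --- in particular, confirming that $\psi$ is well defined before using it, and checking uniqueness of the representation $U_\psi$ (equivalently, that the two passages compose to the identity in both directions), which is exactly the content that upgrades the correspondence from a mere surjection to a bijection.
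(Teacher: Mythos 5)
Your argument is correct: it is the standard proof of Goursat's Lemma, and the essential verifications are all present --- well-definedness of $\psi$ (independence of both the witness $c$ and the representative $a$), normality of $B$ and $D$ via conjugation inside $U$, injectivity of $\psi$ from $\psi(aB)=D$ forcing $(a,e_2)\in U$, and the check that the two constructions $U\mapsto(A/B,C/D,\psi)$ and $(A/B,C/D,\psi)\mapsto U_\psi$ are mutually inverse, which is precisely what makes the correspondence a bijection and the representation $U_\psi$ unique. Be aware, however, that there is no internal proof to compare against: the paper states this result as a classical theorem and defers its proof to the references \cite{goursat} and \cite{Petrillo}. What the paper does prove in this vicinity is Lemma~\ref{lemma:psiforphi} (taken from \cite{GOST_ric}), which re-expresses $U_\psi$ as $\{(a,a\varphi+d) : a\in A,\, d\in D\}$ for any homomorphism $\varphi\colon A\to C$ inducing $\psi$; your explicit construction of $\psi$ from a subgroup $U$ is exactly the kind of data that lemma takes as input, so your proof complements rather than duplicates what is written. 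Two further remarks: your observation that the normality hypotheses are automatic in the abelian case is the relevant one here, since the paper only ever applies the theorem to the elementary abelian group $V\times V$ written additively; and your proof, carried out multiplicatively for arbitrary groups, is strictly more general than the paper needs but matches the generality of the statement as quoted.
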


\noindent Note that the isomorphism $\psi: A/B\to C/D$ is induced by a homomorphism $\varphi: A \to C$ such that $(a+B)\psi=a\varphi + D$ for any $a\in A$, and $B\varphi\leq D$. Such homomorphism is not unique. 
\begin{lemma}[\cite{GOST_ric}]\label{lemma:psiforphi}
  In the above notation, given any homomorphism $\phi$ inducing $\psi$, we have 
 \begin{equation}\label{eq:upsi}
    U_{\psi}
    =
    \{
      (a, a \varphi + d)
      :
      a \in A, d \in D
      \}.
  \end{equation}
\end{lemma}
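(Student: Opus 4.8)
The plan is to start from the characterisation of $U_\psi$ given in the Goursat bijection and rewrite it using a chosen homomorphism $\varphi: A \to C$ inducing $\psi$. By the remark preceding the lemma, such a $\varphi$ exists and satisfies $(a+B)\psi = a\varphi + D$ for all $a\in A$, together with $B\varphi \leq D$. The goal is to show that the set on the right-hand side of \eqref{eq:upsi} coincides with the subgroup $U_\psi$ defined in Goursat's Lemma. I would prove this by double inclusion.

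First I would establish the inclusion $U_\psi \subseteq \{(a, a\varphi + d) : a\in A, d\in D\}$. Take $(a,c)\in U_\psi$, so by definition $a\in A$, $c\in C$, and $(a+B)\psi = c+D$. Using the inducing relation $(a+B)\psi = a\varphi + D$, this gives $c + D = a\varphi + D$, hence $c - a\varphi \in D$ (additively, $c + a\varphi \in D$ in the $\mathbb F_2$-setting, though the argument is the same over a general abelian group). Setting $d \deq c - a\varphi \in D$, we get $c = a\varphi + d$, so $(a,c) = (a, a\varphi + d)$ lies in the target set. For the reverse inclusion, take any $a\in A$ and $d\in D$ and consider $(a, a\varphi + d)$. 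Here $a\varphi + d \in C$ since $a\varphi \in C$ (as $\varphi$ maps into $C$) and $d\in D \leq C$; moreover $(a\varphi + d) + D = a\varphi + D = (a+B)\psi$, so the defining condition of $U_\psi$ holds and $(a, a\varphi+d)\in U_\psi$.

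The only subtlety worth checking is that the expression on the right-hand side is well-defined as a set, i.e. that it does not depend on ambiguities in the coset representatives; but since $d$ ranges over all of $D$ this is automatic, and the independence from the particular choice of inducing $\varphi$ follows because any two such homomorphisms differ by a map into $D$ (which is absorbed into the $d$ term). I do not expect any serious obstacle here: the statement is essentially a bookkeeping reformulation of the Goursat correspondence, replacing the quotient-map condition $(a+B)\psi = c+D$ with an explicit parametrisation by a lift $\varphi$. The one point requiring mild care is keeping track of the normality and containment hypotheses ($B \trianglelefteq A$, $D \trianglelefteq C$, $B\varphi \leq D$) so that cosets are well-defined and $\varphi$ genuinely descends to $\psi$; since the groups in the intended application $(V\times V, +)$ are abelian, all normality conditions are vacuous and the verification reduces to the two short coset computations above.
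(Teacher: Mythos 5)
Your proof is correct and follows essentially the same route as the paper's: both argue by double inclusion, using the inducing relation $(a+B)\psi = a\varphi + D$ together with the coset identity $a\varphi + D = a\varphi + d + D$ for one direction, and extracting $d = c - a\varphi \in D$ from $c + D = a\varphi + D$ for the other. The extra remarks on independence from the choice of $\varphi$ are harmless but not needed, since the lemma is stated for an arbitrary fixed inducing homomorphism.
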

\begin{proof}
Note first that the  right-hand side of~\eqref{eq:upsi} is contained
  in $U_{\psi}$, since for $a \in A$ and $d \in D$ we have
$
    (a + B) \psi = a\varphi + D =  a\varphi+ d
    + D, 
$
  that is, $(a, a\varphi+ d) \in U_{\psi}$. 
  Moreover $U_{\psi}$ is contained in the right-hand side
  of~\eqref{eq:upsi}. Indeed, if $(a, c) \in U_{\psi}$ we have
$
    a \varphi + D
    =
    (a + B) \psi
    =
    c + D,
$
  so that $c = a \phi + d$ for some $d \in D$.
\end{proof}
In the following result we consider two subgroups of $V \times V$ such that the first is mapped into the second by a Feistel operator. We highlight some condition that such subgroups have to satisfy. We will use the conditions derived from the next lemma also in Theorem~\ref{th:propbloc} and in Theorem~\ref{thm_blocchi}.

\begin{lemma}\label{lm:propsub}
Let  $\rho\in \mathrm{Sym}(V)$ be such that $0\rho=0$ and let $\bar\rho$ be the corresponding Feistel operator. Suppose that there exist two subgroups   $\mathcal{U}_1=
    \{
      (a_1, a_1 \varphi_1 + d_1)
      \mid
      a_1 \in A_1, d \in D_1
      \}$ and $\mathcal{U}_2=
    \{
      (a_2, a_2 \varphi_2 + d_2)
      \mid
      a_2 \in A_2, d_2 \in D_2
      \}$, $\mathcal{U}_1, \mathcal{U}_1 \leq V \times V$, where $A_i$, $D_i$ and $\varphi_i$  are as in Lemma \ref{lemma:psiforphi}, such that 
$$
\mathcal{U}_{1}\bar{\rho}=\mathcal{U}_{2}.
$$
The following properties hold true:
\begin{enumerate}
\item $\Ker\,\varphi_1\leq D_{2}$;
\item $D_2\leq A_1$;
\item $A_2=A_1\varphi_1+D_1$;
\item $D_2\varphi_1\leq D_1$.
\end{enumerate}
Moreover, 
\begin{itemize}
\item[(i)] if $D_1=\{0\}$ and  $D_2=\{0\}$, then $\rho$ is linear on $A_2$;
\item[(ii)] if $\mathcal{U}_1=A_1\times D_1$ and $\mathcal{U}_2=A_2\times D_2$, then $D_1=A_2$ and $D_2=A_1$.
\end{itemize}
\end{lemma}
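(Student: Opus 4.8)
The plan is to extract every one of the four numbered properties, as well as the two ``moreover'' items, directly from the explicit formula $(x_1,x_2)\bar\rho = (x_2,\, x_1 + x_2\rho)$, combined with two elementary features of the Goursat parametrisation: the projection of $\mathcal{U}_i$ onto the first factor is exactly $A_i$, and the elements of $\mathcal{U}_i$ whose first coordinate vanishes are precisely those of the form $(0,d_i)$ with $d_i\in D_i$. Since $\mathcal{U}_1\bar\rho=\mathcal{U}_2$, each generator $(a_1, a_1\varphi_1 + d_1)$ of $\mathcal{U}_1$ satisfies
$$
(a_1\varphi_1 + d_1,\; a_1 + (a_1\varphi_1 + d_1)\rho) \in \mathcal{U}_2,
$$
and I would exploit this one identity, evaluated at well-chosen $a_1,d_1$, throughout. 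I would also repeatedly invoke the hypothesis $0\rho=0$.

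For property (3) I would merely project $\mathcal{U}_1\bar\rho$ onto its first coordinate: as $a_1$ ranges over $A_1$ and $d_1$ over $D_1$, the quantity $a_1\varphi_1 + d_1$ ranges over $A_1\varphi_1 + D_1$, and since the first projection of $\mathcal{U}_2$ is $A_2$ this gives $A_2 = A_1\varphi_1 + D_1$. For property (1) I would take $a_1\in\Ker\varphi_1$ with $d_1=0$; then $(a_1,0)\in\mathcal{U}_1$ and, using $0\rho=0$, its image $(0,a_1)$ lies in $\mathcal{U}_2$ with vanishing first coordinate, forcing $a_1\in D_2$.

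Properties (2) and (4) I would handle by passing through the inverse operator, whose action is $(y_1,y_2)\bar\rho^{-1} = (y_1\rho + y_2,\, y_1)$, so that $\mathcal{U}_2\bar\rho^{-1}=\mathcal{U}_1$. For (2), any $d_2\in D_2$ gives $(0,d_2)\in\mathcal{U}_2$, whose $\bar\rho^{-1}$-image $(d_2,0)$ (again using $0\rho=0$) lies in $\mathcal{U}_1$ and thus has first coordinate $d_2\in A_1$. For (4), once $d_2$ is known to lie in $A_1$, both $(d_2,0)$ and $(d_2, d_2\varphi_1)$ belong to $\mathcal{U}_1$; adding them, the subgroup $\mathcal{U}_1$ contains $(0, d_2\varphi_1)$, whence $d_2\varphi_1\in D_1$ by the zero-first-coordinate description.

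Finally, for (i) I would note that $D_2=\{0\}$ together with (1) forces $\Ker\varphi_1=\{0\}$, so $\varphi_1$ restricts to an isomorphism $A_1\to A_1\varphi_1=A_2$ (using (3) and $D_1=\{0\}$); writing $a_2=a_1\varphi_1$, the second-coordinate half of the defining identity reads $a_2\varphi_2 = a_1 + a_2\rho$, i.e. $a_2\rho = a_2\varphi_1^{-1} + a_2\varphi_2$, which exhibits $\rho|_{A_2}$ as a sum of two linear maps and hence as linear. For (ii), with $\mathcal{U}_1=A_1\times D_1$ a genuine Cartesian product, I would compute $\bar\rho$ directly: $(a,d)\bar\rho=(d,\,a+d\rho)$, so the first coordinate sweeps out $D_1$ (giving $A_2=D_1$), while the images with zero first coordinate arise exactly from $d=0$, namely $(0,a)$ with $a\in A_1$ (giving $D_2=A_1$). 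The only genuinely delicate step is (i): one must check that $\varphi_1$ is invertible on $A_2$ \emph{before} inverting it, which is precisely where the hypotheses $D_1=D_2=\{0\}$ and property (1) must be combined; everything else is straightforward bookkeeping on the actions of $\bar\rho$ and $\bar\rho^{-1}$.
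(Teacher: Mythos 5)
Your proof is correct and follows essentially the same route as the paper's: both arguments work directly from the explicit action of $\bar\rho$ and $\bar\rho^{-1}$ on Goursat-parametrized elements, specialize in the same way ($d_1=0$ with $a_1\in\Ker\varphi_1$ for (1), first coordinate zero via $a_2=0$ or $d=0$ for (2) and (ii), invertibility of $\varphi_1$ before writing $a_2\rho=a_2\varphi_1^{-1}+a_2\varphi_2$ for (i)), and invoke $0\rho=0$ at the same points. Your only departures are minor streamlinings — obtaining (3) in one step by equating first-coordinate projections where the paper proves the two inclusions separately from $\bar\rho$ and $\bar\rho^{-1}$, and obtaining (4) by summing $(d_2,0)$ and $(d_2,d_2\varphi_1)$ inside the subgroup $\mathcal{U}_1$ instead of setting $a_2=0$ in the paper's coordinate identity — which do not change the substance of the argument.
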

\begin{proof}
By assumption, for any $a_1\in A_1$ and $d_1\in D_1$ there exist $x_{2}\in A_{2}$ and $y_{2}\in D_{2}$ such that 
\[
(a_1,a_1\varphi_1+d_1)
\begin{pmatrix}0_n&1_n\\1_n&\rho\end{pmatrix}=
(x_{2},x_{2}\varphi_{2}+y_{2}),
\]
that is
\begin{equation*}\label{eq:rho1}
(a_1\varphi_1+d_1,a_1+(a_1\varphi_1+d_1)\rho)
=
(x_{2},x_{2}\varphi_{2}+y_{2}).
\end{equation*}
From this we derive $a_1\varphi_1+d_1=x_2$ and so $A_1\varphi_1+D_1\leq A_2$. Moreover, since $\varphi_1$ is a homomorphism, we have
\[
a_1+(a_1\varphi_1+d_1)\rho=a_1\varphi_1\varphi_{2}+d_1\varphi_{2}+y_{2},
\]
hence, considering $d_1=0$ and $a_1\in \Ker\, \varphi_1$, we obtain $\Ker\,\varphi_1\leq D_{2}$.\\
Similarly, from $\mathcal{U}_{2}\bar{\rho}^{-1}=\mathcal{U}_1$, we obtain that for any $a_{2}\in A_{2}$ and $d_{2}\in D_{2}$ there exist $x_{1}\in A_{1}$ and $y_{1}\in D_{1}$ such that 
\[
(a_{2},a_{2}\varphi_{2}+d_{2})
\begin{pmatrix}\rho&1_n\\1_n&0_n\end{pmatrix}=
(x_{1},x_{1}\varphi_{1}+y_{1}),
\]
that is
\begin{equation}\label{eq:rhoinv1}
(a_{2}\rho +a_{2}\varphi_{2}+d_{2},a_{2})
=
(x_{1},x_{1}\varphi_{1}+y_{1}).
\end{equation}
From this it follows $a_{2}\rho +a_{2}\varphi_{2}+d_{2} \in A_1$, and considering $a_2=0$ we have $d_{2} \in A_1$ for any $d_2\in D_2$, and so $D_2 \leq A_1$. Moreover, since  $a_2=x_{1}\varphi_{1}+y_{1}$, we have $A_2\leq A_1\varphi_1+D_1$, and so $A_1\varphi_1+D_1=A_2$.
By Eq.\eqref{eq:rhoinv1}, we also obtain
\[
a_{2}+(a_{2}\rho+a_{2}\varphi_{2}+d_{2})\varphi_{1}=d_1,
\]
from which it follows that $D_{2}\varphi_1\leq D_1$, considering $a_{2}=0$.\\
\noindent If $D_1=D_2=\{0\}$, since $\Ker\,\varphi_1\leq D_{2}$, we have that $\varphi_1$ is an isomorphism. Then from \eqref{eq:rhoinv1} we obtain
$$
a_2\rho=a_{2}\varphi_{2}+a_2\varphi_1^{-1},
$$
for any $a_2\in A_2$. The last equation implies that $\rho$ acts linearly over $A_2$.

\noindent If $\mathcal{U}_1=A_1\times D_1$ and $\mathcal{U}_2=A_2\times D_2$, then  $A_1\varphi_1\leq D_1$ and $A_2\varphi_2\leq D_2$. So $A_2=D_1$, since $A_2 = A_1 \varphi_1 + D_1$. Finally, since $(a_1,0)\bar\rho=(0,a_1)$ we obtain that $A_1\leq D_2$, and so $A_1=D_2$.
\end{proof}
In the following theorem we show that the study of the partition propagation after two rounds of a Feistel network can be reduced to the study of the partition propagation in a round of the corresponding SPN.
A similar argument is used to provide a reduction from the primitivity of the group generated by a Feistel network to the one of the related SPN  \cite{wave}.
\begin{theorem}\label{th:propbloc}
Let  $\rho_1,\rho_2\in \mathrm{Sym}(V)\setminus \mathrm{AGL}(V)$ and let $\bar\rho_1$ and $\bar\rho_2$ be the corresponding Feistel operators. Suppose that there exist two non-trivial and proper subgroups $\mathcal{U}_1$ and $\mathcal{U}_2$  of $V\times V$ such that
\begin{enumerate}
\item  for each $(v_{1},w_{1})\in V\times V$ there exists $(v_{2},w_{2}) \in V\times V$ such that
\[
(\mathcal{U}_{1}+(v_{1},w_{1}))\bar{\rho}_1=\mathcal{U}_{2}+(v_{2},w_{2}),
\]
 \item  for each  $(v_{2 },w_{2})\in V\times V$ there exists $(v_{1},w_{1}) \in V\times V$ such that
\[
(\mathcal{U}_{2}+(v_{2},w_{2}))\bar{\rho}_2=\mathcal{U}_{1}+(v_{1},w_{1}).
\]
\end{enumerate}
Then there exist $U_1$ and $W_1$ non-trivial and proper subgroups of $V$ such that 
 for each $v\in V$ there exists $w\in V$ such that
\[
(U_1+v)\rho_1 = W_1+w.
\]
Analogously, then there exist $U_2$ and $W_2$ non-trivial and proper subgroups of $V$ such that 
 for each $v\in V$ there exists $w\in V$ such that
\[
(U_2+v)\rho_2 = W_2+w.
\]

\end{theorem}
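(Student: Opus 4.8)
The plan is to turn the two coset-level conditions into genuine subgroup equalities, read off the Goursat data of $\mathcal U_1$ and $\mathcal U_2$ via Lemma \ref{lemma:psiforphi} and Lemma \ref{lm:propsub}, and then show that $\rho_1$ (resp. $\rho_2$) propagates the linear partition cut out by the projections of $\mathcal U_1,\mathcal U_2$ onto the factor $V$. First I would normalise at the origin: since $0\rho_i=0$, evaluating condition (1) on the coset through $(0,0)$ gives $\mathcal U_1\bar\rho_1=\mathcal U_2$, and likewise condition (2) gives $\mathcal U_2\bar\rho_2=\mathcal U_1$ (the image of a coset containing $(0,0)$ is the coset of the target containing $(0,0)$). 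Applying Lemma \ref{lm:propsub} to each equality — with the two factors of $V\times V$ swapped in the second application — produces the structural relations among the $A_i,D_i,\varphi_i$. In projection language, set $X=\pi_2(\mathcal U_1)$ and $Y=\pi_1(\mathcal U_1)$; because the new first coordinate of a Feistel operator is the old second coordinate, condition (1) gives $\pi_1(\mathcal U_2)=X$ and condition (2) gives $\pi_2(\mathcal U_2)=Y$.

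The core computation is short. Take any coset $\mathcal U_1+(p,q)$; by (1) its image under $\bar\rho_1$ is a coset $\mathcal U_2+(v_2,w_2)$ whose second-coordinate projection is $Y+w_2$. For $(x_1,x_2)$ in the coset we have $(x_1,x_2)\bar\rho_1=(x_2,x_1+x_2\rho_1)$, so $x_1+x_2\rho_1\in Y+w_2$; since the $x_1$ attached to a given $x_2$ lies in $\pi_1(\mathcal U_1)+p=Y+p$, we obtain $x_2\rho_1\in Y+(w_2+p)$ as $x_2$ runs over $X+q$. Thus $\rho_1$ sends every coset of $X$ into a coset of $Y$, and symmetrically $\rho_2$ sends every coset of $Y$ into a coset of $X$. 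Counting cardinalities gives $|X|\le|Y|$ and $|Y|\le|X|$, hence $|X|=|Y|$; as $\rho_1\in\Sym(V)$ the inclusions must be equalities, i.e. $(X+v)\rho_1=Y+w$ for each $v$ and $(Y+v)\rho_2=X+w$ for each $v$. This is exactly the assertion, with $U_1=X,\,W_1=Y$ and $U_2=Y,\,W_2=X$.

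What remains — and what I expect to be the main obstacle — is to guarantee that these subgroups are \emph{non-trivial and proper}. Non-triviality is handled directly: if $X=\pi_2(\mathcal U_1)=\{0\}$ then $\mathcal U_1=\pi_1(\mathcal U_1)\times\{0\}$, so $\mathcal U_1\bar\rho_1=\{0\}\times\pi_1(\mathcal U_1)=\mathcal U_2$; substituting into $\mathcal U_2\bar\rho_2=\mathcal U_1$ and using that $\rho_2$ is a bijection fixing $0$ forces $\pi_1(\mathcal U_1)=\{0\}$, contradicting non-triviality of $\mathcal U_1$. The delicate point is properness. Since $|X|=|Y|$ one has $X=V$ iff $Y=V$, and the degenerate case $X=Y=V$ (all four projections equal to $V$) is not excluded by the arguments above; here the partitions $\cL(X),\cL(Y)$ collapse and the chosen subgroups fail to be proper.

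To overcome this I would, in the case of full projections, fall back on the second Goursat datum. When all projections equal $V$ we may write $\mathcal U_1=\{(a,a\varphi_1+d)\mid a\in V,\ d\in D_1\}$, and $D_1=\{0\}$ would make $\rho_1$ linear on $V$ by Lemma \ref{lm:propsub}(i), contradicting $\rho_1\notin\AGL(V)$; hence $D_1,D_2\ne\{0\}$, and they are proper because $\mathcal U_1,\mathcal U_2$ are. The relation $D_1\varphi_2\le D_2$ furnished by Lemma \ref{lm:propsub} then shows that $\rho_1$ maps $\cL(D_1)$ onto $\cL(D_2)$ and $\rho_2$ maps $\cL(D_2)$ onto $\cL(D_1)$, so one takes $U_1=D_1,\,W_1=D_2$ (and $U_2=D_2,\,W_2=D_1$) in this case. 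It is precisely this degenerate situation, together with the need to invoke the hypothesis $\rho_i\notin\AGL(V)$ to rule out the affine degeneracy, that I regard as the heart of the proof; the remaining verifications are routine coset chasing.
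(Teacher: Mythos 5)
Your argument is correct, and it is essentially the mirror image of the paper's proof: the two proofs use the same ingredients (Lemma \ref{lemma:psiforphi}, Lemma \ref{lm:propsub}, the hypothesis $\rho_i\notin\AGL(V)$) but with the roles of the Goursat data reversed. The paper works inside the Goursat parametrization from the outset: writing $\cU_i=\{(a_i,a_i\varphi_i+d_i)\}$, it proves $(D_i+w_i)\rho_i\subseteq D_{i+1}+w_i\rho_i$, deduces $|D_1|=|D_2|$ and hence the coset equality of Eq.~\eqref{eq:blocchi}, and takes the \emph{fibers} $D_i$ as the subgroups $U_i, W_i$; only when the fibers degenerate does it fall back on the \emph{projections}, obtaining Eq.~\eqref{eq:blocchifinale} for the $A_i$'s and invoking $\rho_i\notin\AGL(V)$ in the extreme case $A_i=(\FF_2)^n$, $D_i=\{0\}$. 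You do the opposite: your primary objects are the projections $X=\pi_2(\cU_1)=\pi_1(\cU_2)$ and $Y=\pi_1(\cU_1)=\pi_2(\cU_2)$, and your coset chase (writing $x_2\rho_1=x_1+(x_1+x_2\rho_1)$ with both summands confined to cosets of $Y$) needs no Goursat parametrization at all, which makes the main case genuinely more elementary than the paper's computation; you resort to the fibers $D_i$ only when $X=Y=V$. Both proofs bottom out at the same irreducible configuration, namely $\cU_i$ equal to the graph of an automorphism of $V$ (projections full, fibers trivial), and both kill it with $\rho_i\notin\AGL(V)$. What your organisation buys is a cleaner and more conceptual main case; what the paper's buys is that every case is an instance of a single formula.

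Two steps in your fallback case $X=Y=V$ should be written out, though neither is a genuine obstruction. First, Lemma \ref{lm:propsub}(i) requires \emph{both} $D_1=\{0\}$ and $D_2=\{0\}$, so your sentence ``$D_1=\{0\}$ would make $\rho_1$ linear'' needs the implication $D_1=\{0\}\Rightarrow D_2=\{0\}$: here $\cU_1$ is the graph of the bijection $\varphi_1$ (surjectivity follows from $C_1=V$), so $\cU_2=\cU_1\bar\rho_1=\{(a\varphi_1,\,a+(a\varphi_1)\rho_1)\mid a\in V\}$ has trivial fiber over $0$, i.e.\ $D_2=\{0\}$; alternatively, prove the coset inclusions for the fibers first and read off $|D_1|=|D_2|$. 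Second, the claim that $D_1\varphi_2\le D_2$ forces $\rho_1$ to map $\cL(D_1)$ onto $\cL(D_2)$ does require the short computation behind Eq.~\eqref{eq:blocchi}: the image of $(v,d_1+w)\in\cU_1+(v,w)$ under $\bar\rho_1$ lies in $\cU_2+(w,v+w\rho_1)$, so $\bigl(d_1,\,(d_1+w)\rho_1+w\rho_1\bigr)\in\cU_2$, and since $d_1\in D_1\le A_2$ and $d_1\varphi_2\in D_2$ this gives $(d_1+w)\rho_1\in D_2+w\rho_1$; the reverse direction and a counting argument then yield equality. With these two routine insertions (and the standard justification of the normalisation $0\rho_i=0$, which the paper also only asserts), your proof is complete.
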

\begin{proof}
By Lemma~\ref{lemma:psiforphi} we have
$$
\mathcal{U}_i=\{(a_i,a_i\varphi_i+d_i) \mid a_i \in A_i \text{ and } d_i\in D_i\}
$$
for each $i=1,2$. What follows from now on holds for both $i=1$ and $i=2$, where if $i=2$ we consider $i+1$ as $(i+1) \mod 2=1$. We can assume without loss of generality that $0\rho_1=0\rho_2=0$. Using assumptions 1. and 2., applying Lemma \ref{lm:propsub} we obtain $A_{i+1}= A_{i}\varphi_{i}+D_i$, $D_{i+1}\varphi_i\leq D_i$ and $\Ker\,\varphi_i\leq D_{i+1}$. Since $(v,w)\begin{pmatrix}0_n&1_n\\1_n&\rho_i\end{pmatrix}=(w,v+w\rho_i)$ for each $(v,w)\in V\times V$, in assumptions 1. and 2. we can assume $v_{i+1}=w_{i}$ and $w_{i+1}=v_{i}+w_{i}\rho_i$. Therefore, in the general case, for any $a_i\in A_i$, $d_i\in D_i$ and $(v_i,w_i)\in V\times V$ there exist $x_{i+1}\in A_{i+1}$ and $y_{i+1}\in D_{i+1}$ such that
\[
(a_i+v_i,a_i\varphi_i+d_i+w_i)
\begin{pmatrix}0_n&1_n\\1_n&\rho_i\end{pmatrix}=
(x_{i+1}+w_i,x_{i+1}\varphi_{i+1}+y_{i+1}+v_i+w_i\rho_i)
\]
that is, since the maps $\varphi_i$ are homomorphisms,
\[
a_i+(a_i\varphi_i+d_i+w_i)\rho_i=a_i\varphi_i\varphi_{i+1}+d_i\varphi_{i+1}+y_{i+1}+w_i\rho_i.
\]
Hence, considering $a_i=0$, it follows that
\[
(D_i+w_i)\rho_i\leq D_{i+1}+w_i\rho_i,
\]
yielding $|D_i|\leq |D_{i+1}|$ for $i \in \{1,2\}$, therefore $|D_1| = |D_2|$. Consequently, for $i \in \{1,2\}$ we obtain 
\begin{equation}\label{eq:blocchi}
(D_i+w_i)\rho_i =D_{i+1}+w_i\rho_i.
\end{equation}
From this the desired result follows, provided that $D_i$ and $D_{i+1}$ are both different from $(\FF_2)^n$ and both different from $\{0\}$. 
First note that $D_i = (\FF_2)^n$ if and only if $D_{i+i} = (\FF_2)^n$. Analogously  $D_i = \{0\}$ if and only if $D_{i+i} =\{0\}$.

\vspace{2mm}

\noindent$\mathbf{\left[D_i=(\FF_2)^n\right]}$ 
Since $D_1\leq A_{2}$ and $D_2\leq A_1$, we have $A_{2}=(\FF_2)^n=A_1$. Therefore $C_i=B_i=(\FF_2)^n$ for $i=\{1,2\}$, since $A_i/B_i \cong C_i/D_i$, and so $\mathcal{U}_{i}$ is trivial, a contradiction.
\vspace{2mm}

\noindent$\mathbf{\left[D_i=\{0\}\right]}$
Since $\Ker\,\varphi_1\leq D_2=\{0\}$ and $\Ker\,\varphi_2\leq D_1=\{0\}$, we have that 
$$
\psi_i=\varphi_i:A_i\xrightarrow{\cong} C_i
$$ 
is an isomorphism. Therefore
$$
A_{i}\cong C_{i}=A_{i}\varphi_{i}=A_{i+1}\cong C_{i+1}=A_{i+1}\varphi_{i+1}
$$
and in particular $|A_{i}|=|A_{i+1}|$.\\
Since $D_i=D_{i+1}=\{0\}$,  for any $a_i\in A_i$ and $(v_i,w_i)\in V\times V$ there exists $x_{{i+1}}\in A_{{i+1}}$  such that
\[
(a_i+v_i,a_i\varphi_i+w_i)
\begin{pmatrix}0_n&1_n\\1_n&\rho_i\end{pmatrix}=
(x_{{i+1}}+w_i,x_{{i+1}}\varphi_{{i+1}}+v_i+w_i\rho_i),
\]
that is
\begin{equation}\label{eq:blocchiD=0}
a_i+(a_i\varphi_i+w_i)\rho_i=a_i\varphi_i\varphi_{i+1}+w_i\rho_i.
\end{equation}
If $A_i=\{0\}$, then $C_i=\{0\}$ and so $\mathcal{U}_i$ is trivial,  a contradiction. Other\-wise, if $A_i=(\FF_2)^n$, then $A_i\varphi_i=A_{i+1}=(\FF_2)^n$ and $a_i\varphi_i+w_i$ is an element of $A_{i+1}=(\FF_2)^n$. Hence in Eq.~\eqref{eq:blocchiD=0} we can consider $w_i=0$, obtaining
\[
a_i+(a_i\varphi_i)\rho_i=a_i\varphi_i\varphi_{i+1}.
\]
Since the function $x \mapsto x + x\varphi_i\varphi_{i+1}$ is linear, we proved that $\rho_i \in \AGL(V )$, which is a contradiction since by hypothesis we are assuming $\rho_i \in \Sym(V) \setminus\AGL(V )$. \\
If $A_i< (\FF_2)^n$, for $i \in \{1,2\}$ we obtain
\[
(a_i\varphi_i+w_i)\rho_i=a_i+a_i\varphi_i\varphi_{i+1}+w_i\rho_i.
\]
Since $a_i$ and $a_i\varphi_i\varphi_{i+1}$ is contained in $A_i$  and  $a_i\varphi_i$ is an element of $A_{i+1}$ for each $a_i\in A_i$, and $|A_i|=|A_{i+1}|$, then by Eq.~\eqref{eq:blocchiD=0} we obtain
\begin{equation}\label{eq:blocchifinale}
(A_{i+1}+w_i)\rho_i=A_i+w_i\rho_i,
\end{equation}
with $A_i, A_{i+1}<(\FF_2)^n$. 
This concludes the proof: indeed, if $D_1$ and $D_2$ are both proper and non-trivial subgroups of $V$, the claim follows from Eq.~\eqref{eq:blocchi}. Otherwise, the claim follows from Eq.~\eqref{eq:blocchifinale}.
\end{proof}

The following result examines the converse implication of Theorem~\ref{th:propbloc}.

\begin{theorem}\label{th:converse}
Let  $\rho\in \mathrm{Sym}(V)$ and let $\bar\rho$   be the corresponding Feistel operator. 
If there exist $\cL(U_1)$ and $\cL(U_2)$ non-trivial linear partitions of $V$ such that $\cL(U_1)\rho=\cL(U_2)$, then there exist two non-trivial linear partitions of $V\times V$ such that $\cL(\cU_1)\bar\rho=\cL(\cU_2)$.
\end{theorem}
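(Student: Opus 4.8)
The plan is to produce the required partitions of $V\times V$ explicitly from $U_1$ and $U_2$. First I would reformulate the hypothesis into a pointwise statement. Since $\cL(U_1)\rho=\cL(U_2)$, each set $(U_1+v)\rho$ is a coset of $U_2$; and because $0\in U_1$ we have $v\rho\in(U_1+v)\rho$, so the coset in question is forced to be the unique coset of $U_2$ through $v\rho$. This gives
\[
(U_1+v)\rho=U_2+v\rho\qquad\text{for every }v\in V.
\]
This is the only consequence of the hypothesis that the argument will use, and I note that it requires no assumption on $0\rho$.

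Next I would set $\cU_1\deq U_2\times U_1$ and $\cU_2\deq U_1\times U_2$, both subgroups of $V\times V$. Since $\cL(U_1)$ and $\cL(U_2)$ are non-trivial, $U_1$ and $U_2$ are non-trivial and proper, whence $\cU_1$ and $\cU_2$ are non-trivial and proper subgroups of $V\times V$; thus $\cL(\cU_1)$ and $\cL(\cU_2)$ are non-trivial linear partitions. The candidate claim is then $\cL(\cU_1)\bar\rho=\cL(\cU_2)$.

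The core step is to evaluate $\bar\rho$ on an arbitrary coset of $\cU_1$. Fixing $(a,b)\in V\times V$ and considering the block $(U_2+a)\times(U_1+b)$ of $\cL(\cU_1)$, its image under $\bar\rho$ is
\[
\{(x_2,\;x_1+x_2\rho)\mid x_1\in U_2+a,\ x_2\in U_1+b\}.
\]
The first coordinate $x_2$ sweeps exactly $U_1+b$. For the second coordinate, the key observation is that for every fixed $x_2\in U_1+b$ one has $x_2\rho\in(U_1+b)\rho=U_2+b\rho$ by the pointwise identity above, so as $x_1$ ranges over $U_2+a$ the value $x_1+x_2\rho$ sweeps $(U_2+a)+x_2\rho=U_2+a+b\rho$, a coset that does not depend on the chosen $x_2$. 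Hence the image is the full product $(U_1+b)\times(U_2+a+b\rho)$, which is precisely the coset $\cU_2+(b,\,a+b\rho)$ of $\cU_2$.

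Finally I would assemble this into the partition statement: every block of $\cL(\cU_1)$ is sent by $\bar\rho$ to a block of $\cL(\cU_2)$, and as $(a,b)$ ranges over $V\times V$ the pair $(b,\,a+b\rho)$ ranges over all cosets of $\cU_2$ (varying $b$ covers every $U_1$-coset in the first factor, and for fixed $b$, varying $a$ covers every $U_2$-coset in the second). Since $\bar\rho$ is a bijection, the images are pairwise disjoint and cover $V\times V$, so the collection $\{C\bar\rho\mid C\in\cL(\cU_1)\}$ is a partition of $V\times V$ into $\cU_2$-cosets and therefore equals $\cL(\cU_2)$. I expect the only delicate point to be the verification that the second coordinate fills the entire coset $U_2+a+b\rho$ independently of $x_2$: this is exactly where the coset-mapping hypothesis on $\rho$ is used, and everything else is bookkeeping on cosets.
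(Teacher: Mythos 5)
Your proof is correct and takes essentially the same approach as the paper's: the same pointwise reformulation $(U_1+v)\rho=U_2+v\rho$, the same subgroups $\cU_1=U_2\times U_1$ and $\cU_2=U_1\times U_2$, and the same coset computation under $\bar\rho$. The only cosmetic difference is that you obtain the image coset exactly by sweeping $x_1$ for fixed $x_2$, whereas the paper first proves the containment $\bigl(\cU_1+(v,v')\bigr)\bar\rho\subseteq \cU_2+(v',v+v'\rho)$ and then upgrades it to equality by a cardinality argument using the bijectivity of $\bar\rho$.
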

\begin{proof}
Since $\cL(U_1)\rho=\cL(U_2)$, for each $v\in V$ there exists $w\in V$ such that
\begin{equation}\label{eq:partU}
(U_1+v)\rho = U_2+w.
\end{equation}
Notice that in Eq.~\eqref{eq:partU} we can choose $w=v\rho$.

Let $\mathcal{U}_1\deq\{(u,u')\in U_2\times U_1\}$ and $\mathcal{U}_2\deq\{(u,u')\in U_1\times U_2\}$. Since $U_{1},\, U_2$ are not trivial, then also $\mathcal{U}_1$ and $\mathcal{U}_2$ are non-trivial. 
Let $(u,u')\in \mathcal{U}_1$ and $(v,v')\in V\times V$, then we have
\[
(u+v,u'+v')\bar\rho=(u+v,u'+v')\begin{pmatrix}0_n&1_n\\1_n&\rho\end{pmatrix}=\bigl(u'+v',u+v+(u'+v')\rho\bigr).
\]
By Eq.~\eqref{eq:partU}, there exists $u''\in U_2$ such that $(u'+v')\rho=u''+v'\rho$ and so for each $(u,u')\in \mathcal{U}_1$ and $(v,v')\in V\times V$ we obtain 
\[
(u+v,u'+v')\bar\rho=(u'+v',(u+u'')+v+v'\rho)\in (U_1+v')\times(U_2+v+v'\rho).
\]
Since $(U_1+v')\times(U_2+v+v'\rho)=\mathcal{U}_2+(v',v+v'\rho)$, we have
\begin{equation*}
\bigl(\mathcal{U}_1+(v,v')\bigr)\bar\rho\subseteq \mathcal{U}_2+(v',v+v'\rho)
\end{equation*}
and so
\[
\bigl(\mathcal{U}_1+(v,v')\bigr)\bar\rho= \mathcal{U}_2+(v',v+v'\rho)
\]
since $|\mathcal{U}_1|=|(\mathcal{U}_1+(v,v'))\bar\rho|=| \mathcal{U}_2+(v',v+v'\rho)|=|\mathcal{U}_{2}|$.
\end{proof}
\begin{remark}
Notice that if $U_1=U_2$ then $\cU_1=\cU_2$. In this case, Theorem \ref{th:converse} provides the converse of Theorem 4.5 proved in \cite{wave}. In other words, the primitivity of the group $\langle \, \rho, T(V)\rangle$ is a necessary and sufficient condition for the primitivity of the group generated by the round functions of the Feistel network acting on $V\times V$ and having $\rho$ as generating function for each round.
\end{remark}

As announced, we  provide a partial generalisation of Theorem~\ref{th:partcal} in the Feistel network case. In particular, we show some types of block systems which are not usable for the purpose of the partition-based cryptanalysis. More precisely, we show that if a Feistel network has a sequence of non-trivial linear partitions which propagate from the first round to the last one, then such partitions cannot be of the type specified in the following theorem. In other words, we are studying the propagation of linear partitions under the action of $r$ rounds, where each possible key can be chosen, i.e. under the action of a long-key Feistel network. The considered Feistel network has a generating function which is the composition of a parallel S-box followed by a diffusion layer, i.e. an SPN-like generating function. The same notation of Lemma~\ref{lemma:psiforphi} is used in the following result.

\begin{theorem}\label{thm_blocchi}
Let  $\rho_1,\ldots,\rho_r\in \mathrm{Sym}(V)$ and 
let $\Phi$ be the $r$-round Feistel network where the $i$-th round applies the Feistel operator $\bar\rho_i$ induced by $\rho_i$. Let us assume that $0\rho_i=0$ and 
$\rho_i=\gamma_i\gl_i$, where
\begin{enumerate}
\item[a)] $\g_i$ is a parallel map which applies $2^\delta$-differentially uniform and $(\delta-1)$-strongly anti-invariant S-boxes, for some $\delta < s$, where $s$ denotes the dimension of each brick,
\item[b)] $\gl_i$ a linear strongly-proper diffusion layer.
\end{enumerate}
Suppose that there exists a sequence of $r+1$ non-trivial linear partitions \break $\cL(\cU_1),\ldots,\cL(\cU_{r+1})$, where $\cU_i$ is a proper and non-trivial subgroup of $V \times V$ and $\cL(\cU_i)\bar{\rho}_i=\cL(\cU_{i+1})$ for all $1 \leq i \leq r$.
Then, none of the following condition is satisfied:
\begin{enumerate}
\item there exists $1 \leq i \leq r-1$ such that $\cL(\cU_{i+1})\bar{\rho}_{i+1}=\cL(\cU_{i})$,
\item there exists $1 \leq i \leq r-1$ such that $\cU_{i}=A_i\times D_i$, $\cU_{i+1}=A_{i+1}\times D_{i+1}$ and $\cU_{i+2}=A_{i+2}\times D_{i+2}$,
\item there exists $1 \leq i \leq r$ such that $D_i=\{0\}$ and $D_{i+1}=\{0\}$,
\item there  exists $1 \leq i \leq r$ such that $A_i = \{0\}$ and $A_{i+1}=\{0\}$.
\end{enumerate}
\end{theorem}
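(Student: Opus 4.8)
The plan is to assume the standing hypothesis $\cL(\cU_i)\bar\rho_i=\cL(\cU_{i+1})$ for $1\le i\le r$ and to derive a contradiction from each of the four conditions separately. Throughout I will use three preliminary observations. First, each $\rho_i=\gamma_i\lambda_i$ lies in $\Sym(V)\setminus\AGL(V)$: a $2^\delta$-differentially uniform S-box with $\delta<s$ cannot be linear (a linear map has differential uniformity $2^s$), so $\gamma_i$ is non-linear, and since $\lambda_i$ is linear and $0\rho_i=0$ the product $\rho_i$ is non-affine. Second, I write each subgroup in the normal form $\cU_i=\{(a,a\varphi_i+d)\mid a\in A_i,\ d\in D_i\}$ of Lemma~\ref{lemma:psiforphi}. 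Third, since $\bar\rho_i$ fixes $0$ and sends the block of $\cL(\cU_i)$ through $0$ onto the block of $\cL(\cU_{i+1})$ through $0$, each partition identity is in fact a subgroup identity $\cU_i\bar\rho_i=\cU_{i+1}$, so Lemma~\ref{lm:propsub} applies at every round.

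For condition (1) the extra hypothesis $\cL(\cU_{i+1})\bar\rho_{i+1}=\cL(\cU_i)$ together with the standing one is precisely the hypothesis of Theorem~\ref{th:propbloc} with $\rho_1=\rho_i$, $\rho_2=\rho_{i+1}$. I would invoke that theorem, and specifically the relations~\eqref{eq:blocchi} and~\eqref{eq:blocchifinale} occurring in its proof, which show that $\rho_i,\rho_{i+1}$ carry a pair of nontrivial linear partitions cyclically into one another: $\cL(D_i)\rho_i=\cL(D_{i+1})$ and $\cL(D_{i+1})\rho_{i+1}=\cL(D_i)$ (and the analogue with $A_{i+1},A_i$ when $D_i=D_{i+1}=\{0\}$). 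Removing the diffusion layers gives $\cL(D_i)\gamma_i=\cL(D_{i+1}\lambda_i^{-1})$ and, symmetrically, $\cL(D_{i+1})\gamma_{i+1}=\cL(D_i\lambda_{i+1}^{-1})$; Proposition~\ref{prop:blocchi} applied to each forces both $D_i$ and $D_{i+1}$ to be walls with $D_i\lambda_i=D_{i+1}$, contradicting that $\lambda_i$ is strongly proper. Condition (2) I would reduce to this case: applying Lemma~\ref{lm:propsub}(ii) to the two product relations at rounds $i$ and $i+1$ yields $A_{i+2}=D_{i+1}=A_i$ and $D_{i+2}=A_{i+1}=D_i$, hence $\cU_{i+2}=\cU_i$, so $\cL(\cU_{i+1})\bar\rho_{i+1}=\cL(\cU_{i+2})=\cL(\cU_i)$, which is exactly condition (1) at index $i$.

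Condition (4) is immediate: $A_i=\{0\}$ forces $D_{i+1}\le A_i=\{0\}$ by Lemma~\ref{lm:propsub}(2), so $D_{i+1}=\{0\}$, and with $A_{i+1}=\{0\}$ the subgroup $\cU_{i+1}$ becomes trivial, a contradiction. For condition (3), $D_i=D_{i+1}=\{0\}$ makes $\varphi_i$ injective and, by Lemma~\ref{lm:propsub}(i), $\rho_i$ linear on $A_{i+1}$, whence $\gamma_i=\rho_i\lambda_i^{-1}$ is linear on $A_{i+1}$. If $A_{i+1}=\{0\}$ then $\cU_i$ is trivial; if $A_{i+1}=V$ then $\gamma_i$ is globally linear, contradicting $\delta<s$. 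Otherwise, unwinding the full coset condition at round $i$ shows that $\rho_i$ is affine on every coset of $A_{i+1}$ with one fixed linear part, so $\cL(A_{i+1})\rho_i$ is a nontrivial linear partition; Proposition~\ref{prop:blocchi} then forces $A_{i+1}$ to be a wall $\bigoplus_{j\in J}V_j$ with $J\ne\emptyset$. Since $\gamma_i$ is the parallel application of the S-boxes $\gamma_i^{(j)}$, its linearity on this wall forces each $\gamma_i^{(j)}$, $j\in J$, to be linear on $V_j$, contradicting that $\gamma_i^{(j)}$ is $2^\delta$-differentially uniform with $\delta<s$.

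I expect condition (3) to be the main obstacle. The delicate point is upgrading the pointwise conclusion of Lemma~\ref{lm:propsub}(i) — that $\rho_i$ is merely linear on the single subspace $A_{i+1}$ — to the partition statement $\cL(A_{i+1})\rho_i=\cL(W)$ needed to apply Proposition~\ref{prop:blocchi} and deduce that $A_{i+1}$ is a wall; only after that can linearity on a wall be converted into linearity of an individual S-box. More generally, the bookkeeping that must be kept straight throughout is the repeated passage between subgroup equalities and partition equalities, together with the index cycling that links the output wall of one round to the input wall of the next.
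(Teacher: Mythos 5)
Your proof takes essentially the same route as the paper's: the same four-case contradiction structure, with case (1) handled by invoking Theorem~\ref{th:propbloc} (reading the cyclic relations~\eqref{eq:blocchi} and~\eqref{eq:blocchifinale} off its proof) and then Proposition~\ref{prop:blocchi} against strong properness of $\gl_i$, case (2) reduced to case (1) via Lemma~\ref{lm:propsub}(ii), case (3) settled through Lemma~\ref{lm:propsub}(i) and Proposition~\ref{prop:blocchi} forcing $A_{i+1}$ to be a wall on which an S-box would be linear, and case (4) via Lemma~\ref{lm:propsub}. If anything, your write-up is slightly more careful than the paper's on two minor points: the explicit check that $\rho_i\notin\mathrm{AGL}(V)$ (needed to apply Theorem~\ref{th:propbloc}), and the observation that in case (3) the image partition is $\cL(W)$ for a subgroup $W$ of the same order as $A_{i+1}$, rather than literally $\cL(A_i)$ as the paper writes.
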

\begin{proof}
We proceed in each case by contradiction. 
\begin{enumerate}
\item Let $1 \leq i \leq r-1$  such that $\cL(\cU_{i+1})\bar{\rho}_{i+1}=\cL(\cU_{i})$. Then, by Theorem \ref{th:propbloc},  there exist $U_i$ and $U_{i+1}$ subgroups of $V$ such that $\cL(U_i)\rho_i=\cL(U_{i+1})$ and $\cL(U_{i+1})\rho_{i+1}=\cL(U_{i})$.
Then, by Proposition \ref{prop:blocchi},  $U_i$ and $U_{i+1}$ are walls and $U_{i+1}=U_i\gl_i$, which contradicts the fact that $\gl_i$ is strongly proper.
\item Let $1 \leq i \leq r-1$ such that $\cU_{i}=A_i\times D_i$, $\cU_{i+1}=A_{i+1}\times D_{i+1}$ and $\cU_{i+2}=A_{i+2}\times D_{i+2}$. By Lemma \ref{lm:propsub}, $\cU_{i}=A_i\times A_{i+1}$, $\cU_{i+1}=A_{i+1}\times A_i$ and $\cU_{i+2}=A_i\times A_{i+1}$. This contradicts what previously proved.  
\item Let $1 \leq i \leq r$ such that $D_i=D_{i+1}=\{0\}$, which implies $\cU_i=\{(a_i,a_i\varphi_i)\,:\, a_i \in A_i\}$ and $\cU_{i+1}=\{(a_{i+1},a_{i+1}\varphi_{i+1})\,:\, a_{i+1} \in A_{i+1}\}$.  If $A_i=\{0\}$, then $\cU_i$ is trivial. Since $D_i=D_{i+1}=\{0\}$ and $\Ker\,\varphi_{i}\leq D_{i+1}$, we have that $\varphi_i$ is an isomorphism over $A_i$ and $A_i\varphi_i=A_{i+1}$. Moreover, by Lemma \ref{lm:propsub},  $\rho_i$ is linear over $A_{i+1}$.
If $A_i= (\FF_2)^n$, then $\g_i$ is linear on $V$, which contradicts the fact that $\gamma_i$ satisfies the conditions in a).
Suppose now $A_i<(\FF_2)^n$. As in the proof of Theorem \ref{th:propbloc} we obtain that 
\[
(A_{i+1}+w_i)\rho_i=A_i+w_i\rho_i
\]
for any $w_i$ in $(\FF_2)^n$. Then $\g_i$ maps the linear partition $\cL(A_{i+1})$ into $\cL(A_{i}\gl_i^{-1})$. By Proposition \ref{prop:blocchi},  $A_{i+1}=A_{i}\gl_i^{-1}$, and $A_{i+1}$ is a wall. Since $\rho_i$ is linear over $A_{i+1}$, then $\gamma_i$ is linear over $A_{i+1}$.
If $V_j$ be a brick of the wall $A_{i+1}$, then the S-box of $\g_i$ relative to the brick $V_j$ is a linear map over $V_j$, which is a contradiction. 
\item Let $1 \leq i \leq r$ such that $A_i=A_{i+1}=\{0\}$. By Lemma~\ref{lm:propsub}, $D_{i+1} \leq A_{i} = \{0\}$, hence $\cU_{i}$ is trivial. 
\qedhere
\end{enumerate}
\end{proof}
It is worth noticing that the partition used by Paterson in his construction of a DES-like trapdoor cipher (see ~\cite[Lemma 3]{CGC-cry-art-paterson1}) is as in point 2 in the previous theorem. \\

We conclude this section by observing that it is possible to prove a result similar to Theorem~\ref{thm_blocchi} using a weaker notion of differential uniformity, defined in~\cite{CGC-cry-art-carantisalaImp}, provided a larger value of strong anti-invariance. Recalling that a map $f \in \Sym\left((\FF_2)^s\right)$ is said to be \emph{weakly $\delta$-uniform} if for each $a\in (\FF_2)^s\setminus\{0\}$ we have 
\[
\bigl|\bigl\{xf+(x+a)f \mid x\in (\FF_2)^s\bigr\}\bigr|>\frac{2^{s-1}}{\delta},
\]
the following alternative result is easily checked. Its proof is obtained reasoning as in the proof of Theorem~\ref{thm_blocchi}, since Proposition~\ref{prop:blocchi} is still valid if one assumes that the S-boxes are weakly $2^\delta$-uniform and $\delta$-strongly anti-invariant.
 
 \begin{theorem}\label{thm_blocchi_weak}
Let  $\rho_1,\ldots,\rho_r\in \mathrm{Sym}(V)$ and 
let $\Phi$ be the $r$-round Feistel network where the $i$-th round applies the Feistel operator $\bar\rho_i$ induced by $\rho_i$. Let us assume that $0\rho_i=0$ and 
$\rho_i=\gamma_i\gl_i$, where
\begin{enumerate}
\item[a)] $\g_i$ is a parallel map which applies weakly $2^\delta$-uniform and $\delta$-strongly anti-invariant S-boxes, for some $\delta < s$, 
\item[b)] $\gl_i$ a linear strongly-proper diffusion layer.
\end{enumerate}
Suppose that there exists a sequence of $r+1$ non-trivial linear partitions \break $\cL(\cU_1),\ldots,\cL(\cU_{r+1})$, where $\cU_i$ is a proper and non-trivial subgroup of $V \times V$ and $\cL(\cU_i)\bar{\rho}_i=\cL(\cU_{i+1})$ for all $1 \leq i \leq r$.
Then, none of the following condition is satisfied:
\begin{enumerate}
\item there exists $1 \leq i \leq r-1$ such that $\cL(\cU_{i+1})\bar{\rho}_{i+1}=\cL(\cU_{i})$,
\item there exists $1 \leq i \leq r-1$ such that $\cU_{i}=A_i\times D_i$, $\cU_{i+1}=A_{i+1}\times D_{i+1}$ and $\cU_{i+2}=A_{i+2}\times D_{i+2}$,
\item there exists $1 \leq i \leq r$ such that $D_i=\{0\}$ and $D_{i+1}=\{0\}$,
\item there  exists $1 \leq i \leq r$ such that $A_i = \{0\}$ and $A_{i+1}=\{0\}$.
\end{enumerate}
\end{theorem}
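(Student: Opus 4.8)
The plan is to run verbatim the four-case argument by contradiction used in the proof of Theorem~\ref{thm_blocchi}. The four forbidden configurations, the reduction to the SPN level through Theorem~\ref{th:propbloc}, and the structural consequences drawn from Lemma~\ref{lm:propsub} are all insensitive to the precise non-linearity notion imposed on the S-boxes. Indeed, inspecting that proof one sees that the hypothesis on $\g_i$ is invoked only through Proposition~\ref{prop:blocchi} (to force the relevant subgroups to be walls and to produce the brick-wise linearity contradiction), whereas the hypothesis on $\gl_i$ enters only through its strong-properness, which is unchanged here. Consequently the whole statement reduces to a single point: re-establishing Proposition~\ref{prop:blocchi} with ``$2^\delta$-differentially uniform and $(\delta-1)$-strongly anti-invariant'' replaced by ``weakly $2^\delta$-uniform and $\delta$-strongly anti-invariant''. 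Once this is secured, the four cases transcribe unchanged.

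The first and main step is therefore this replacement, and it rests on a one-line dimension count. Suppose $\gamma$ maps $\cL(U)$ into $\cL(W)$ with $U$ not a wall; as in the original proof, $0\gamma=0$ gives $U\gamma=W$, and on a brick where $U$ restricts to a proper non-trivial subspace one finds, for a fixed non-zero input difference, that the corresponding S-box's output differences all lie in the image subspace. Since the S-box is a permutation these differences are non-zero, so weak $2^\delta$-uniformity produces strictly more than $2^{s-1-\delta}$ of them, forcing the image subspace to have dimension at least $s-\delta$. This is precisely what $\delta$-strong anti-invariance forbids, a proper subspace being carried onto a subspace of dimension $\ge s-\delta$. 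Hence every brick-wise restriction of $U$, and likewise of $W$, is trivial or full, i.e. $U$ and $W$ are walls, and $U=W$ because $\gamma$ stabilises each wall set-wise. The only change from the original bookkeeping is that $2^\delta$-differential uniformity forces $\ge 2^{s-\delta}$ output differences and is matched by $(\delta-1)$-anti-invariance, while the weak hypothesis produces one power of two fewer and is matched by shifting the anti-invariance degree up by one; the two estimates collide at the same contradiction.

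With the revised Proposition~\ref{prop:blocchi} in hand, the four cases follow as in Theorem~\ref{thm_blocchi}. In case~(1), Theorem~\ref{th:propbloc} produces $U_i,U_{i+1}<V$ with $\cL(U_i)\rho_i=\cL(U_{i+1})$ and $\cL(U_{i+1})\rho_{i+1}=\cL(U_i)$; the revised proposition forces these to be walls with $U_{i+1}=U_i\gl_i$, contradicting the strong-properness of $\gl_i$. Case~(2) collapses to case~(1) through the product-structure consequences of Lemma~\ref{lm:propsub}, which give $\cU_i=A_i\times A_{i+1}$, $\cU_{i+1}=A_{i+1}\times A_i$ and $\cU_{i+2}=A_i\times A_{i+1}$. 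Case~(3) again invokes the revised proposition to conclude that some S-box is linear on a brick, against the assumption on $\g_i$. Case~(4) needs no non-linearity input at all, being the purely group-theoretic consequence $D_{i+1}\le A_i=\{0\}$ of Lemma~\ref{lm:propsub}.

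The hard part is thus the re-proof of Proposition~\ref{prop:blocchi}: one must check that the brick-wise reduction remains legitimate even when $U$ is not a direct sum of its brick components, and that the estimate ``more than $2^{s-1-\delta}$ output differences'' meets the $\delta$-anti-invariance bound with exactly the right off-by-one, so that the dimensions $\ge s-\delta$ and $<s-\delta$ genuinely contradict rather than coincide at a borderline value. Everything downstream of that single inequality is identical to the already-established argument.
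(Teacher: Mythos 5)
Your proposal is correct and follows exactly the paper's own route: the paper's proof of Theorem~\ref{thm_blocchi_weak} consists of the single observation that Proposition~\ref{prop:blocchi} remains valid when ``$2^\delta$-differentially uniform and strongly $(\delta-1)$-anti-invariant'' is replaced by ``weakly $2^\delta$-uniform and strongly $\delta$-anti-invariant'', after which the four-case contradiction argument of Theorem~\ref{thm_blocchi} is rerun verbatim, which is precisely your plan. You in fact supply more detail than the paper, since your dimension count (a subspace of a brick containing more than $2^{s-1-\delta}$ nonzero output differences has size at least $2^{s-\delta}$, hence dimension at least $s-\delta$, exactly what strong $\delta$-anti-invariance forbids, while the differential-uniformity version gains one extra power of two from the excluded zero difference and is matched by $(\delta-1)$-anti-invariance) is the off-by-one trade the paper leaves as ``easily checked''.
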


\section{Conclusions and open problems}
In this work, partition propagation under the action of a long-key Feistel network has been investigated, and some previous results \cite{bannier2016partition,bannier2017partition,calderini}  set in a long-key SPN scenario have been generalized. In details, we proved that  only linear partitions can propagate under the action of a long-key Feistel network. Moreover, we presented some types of block systems which are not usable for the purpose of the partition-based cryptanalysis. In other words, we showed that if in a long-key Feistel network  a sequence of non-trivial linear partitions  propagate from the first round to the last one, then such partitions cannot be of some types used in specific attacks (see e.g.~\cite{CGC-cry-art-paterson1}). 
\medskip

The problem of giving a complete generalisation of Theorem~\ref{th:partcal} of \cite{calderini} to the case of Feistel networks is still open. Moreover, the optimal result  that a block-cipher designer can achieve in terms of group-theoretical security is to obtain a cipher whose corresponding group is the larger possible. For this reason, we aim at studying which conditions imply that the group of the encryption functions of a long-key cipher is the alternating or symmetric group, both in case of SPNs and Feistel networks.
\section*{Acknowledgement}
We would like to express our gratitude to the anonymous referees for their valuable comments and suggestions.



\medskip
\medskip

\end{document}